\newcounter{rel}[table]
\renewcommand{\therel}{Rel.\arabic{rel}}
\newcommand{\cA}{\mathcal{A}}
\newcommand{\FK}{\mathcal{FK}}
\newcommand{\Sn}{{\mathbb S}}
\newcommand{\B}{{\mathbb B}}
\renewcommand{\_}[1]{_{\left( #1 \right)}}
\newcommand{\ot}{{\otimes}}
\newcommand{\ku}{\Bbbk}
\newcommand{\Z}{{\mathbb Z}}
\newcommand{\N}{{\mathbb N}}
\newcommand{\D}{\mathcal{D}}
\newcommand{\Rep}{\operatorname{Rep}}
\newcommand{\End}{\operatorname{End}}
\newcommand{\Ind}{\operatorname{Ind}}
\newcommand\sgn{\operatorname{sgn}}
\newcommand\Hom{\operatorname{Hom}}
\newcommand\Mat{\operatorname{Mat}}
\newcommand\id{\operatorname{id}}
\newcommand\soc{\operatorname{soc}}
\newcommand\head{\operatorname{top}}
\newcommand\im{\operatorname{Im}}
\newcommand\Ker{\operatorname{Ker}}
\theoremstyle{plain}
\newtheorem{convention}{Convention}
\newtheorem{lema}{Lemma}[section]
\newtheorem{theorem}[lema]{Theorem}
\newtheorem{cor}[lema]{Corollary}
\newtheorem{prop}[lema]{Proposition}
\newtheorem{question-app}{Question}
\theoremstyle{definition}
\theoremstyle{remark}
\newtheorem{obs}[lema]{Remark}
\newcommand{\Ls}{L}
\newcommand{\Le}{\varepsilon}
\newcommand{\Pl}{P_{\Ls}}
\newcommand{\Pe}{P_{\Le}}
\newcommand{\ba}{{\mathbf a}}
\newcommand{\bt}{{\mathbf t}}
\newcommand{\bs}{{\mathbf s}}
\newcommand{\fka}{{\mathfrak a}}
\newcommand{\fkb}{{\mathfrak b}}
\newcommand{\fkc}{{\mathfrak c}}
\newcommand{\fkw}{{\mathfrak w}}
\newcommand{\xsoc}{x_{soc}}
\newcommand{\xtop}{x_{top}}
\newcommand{\xLt}{x_{\Ls,\bt}}
\newcommand{\xet}{x_{\Le,\bt}}
\newcommand{\xij}[1]{x_{(#1)}}
\newcommand{\fij}[1]{f_{#1}}
\newcommand{\aij}[1]{a_{#1}}
\newcommand{\tij}[1]{t_{#1}}
\newcommand{\sij}[1]{s_{#1}}
\newcommand{\ow}{\overline{w}}
\newcommand{\oomega}{\overline{\omega}}
\begin{document}

\title[The Green ring of a family of copointed Hopf algebras]{The Green ring of a family of copointed Hopf algebras}

\author[Cristian Vay]{Cristian Vay}

\address{\newline\noindent Facultad de Matem\'atica, Astronom\'\i a, F\'\i sica y Computaci\'on,
Universidad Nacional de C\'ordoba. CIEM -- CONI\-CET. Medina Allende s/n, 
Ciudad Universitaria 5000 C\'ordoba, Argentina}
\email{cristian.vay@unc.edu.ar}

\thanks{\noindent 2010 {\it Mathematics Subject Classification.} 16T20, 17B37, 18M05}

\thanks{C. V. is partially supported by  CONICET PIP 11220200102916CO, Foncyt PICT 2016-3927 \& 2020-SERIEA-02847 and Secyt (UNC). This work was finished during a research stay at the Université
Clermont Auvergne supported by CIMPA-ICTP fellowships program ``Research in Pairs''.
}

\begin{abstract}

The copointed liftings of the Fomin-Kirillov algebra $\mathcal{FK}_3$ over the algebra of functions on the symmetric group $\mathbb{S}_3$ were classified by Andruskiewitsch and the author. We demonstrate here that those associated to a generic parameter are Morita equivalent to the regular blocks of well-known Hopf algebras: the Drinfeld doubles of the Taft algebras and the small quantum groups $u_{q}(\mathfrak{sl}_2)$. The indecomposable modules over these were classified independently by Chen, Chari--Premet and Suter. Consequently, we obtain the indocomposable modules over the generic liftings of $\mathcal{FK}_3$. We decompose the tensor products between them into the direct sum of indecomposable modules. We then deduce a presentation by generators and relations of the Green ring.

\end{abstract}

\maketitle

\section{Introduction}

The distinctive feature of a Hopf algebra, and source of its greatest applications, is that its representations form a tensor category. Therefore it is natural to investigate the structure of the tensor products between its modules. This information is encoded in the Green ring which was first considered in the context of finite groups by J. A. Green \cite{green}. This is the ring generated by the isomorphism classes of modules with sum and product induced by the direct sum and tensor product of modules. A first problem which arises is to compute the indecomposable summands of the tensor product of two indecomposable modules.

In the present work, we address the above problem for the generic liftings of the Fomin-Kirillov algebra $\FK_3$ over the algebra on functions of the symmetric group $\Sn_3$. This is certain infinite subfamily of the copointed Hopf algebras over $\ku^{\Sn_3}$ classified by Andruskiewitsch and the author in \cite{AV1}. Moreover, we give a presentation by generators and relations of the Green ring.

Similar results are found in the literature, for instance, for: the small quantum groups $u_q(\mathfrak{sl}_2)$ \cite{kondosaito}; the Taft algebras \cite{green-taft}, their generalized versions \cite{cibils, gen-taft, gen-2taft} and their Drinfeld doubles \cite{Ch4,EGST1,EGST2,Ch6}; the (twisted) Drinfeld doubles of finite groups \cite{Witherspoon1,Witherspoon2}; the non-semisimple Hopf algebras of dimension $8$ \cite{wakui}; the basic Hopf algebras of finite representation type \cite{basic}; the Kac-Paljutkin type algebras \cite{kac-palt}; and the Wu--Liu--Ding algebras \cite{yayaI, yayaII, yayaIII}. The major obstacles for dealing with other Hopf algebras are that they usually are of infinite representation type and the explicit computations of actions are too involved, as we can appreciate in some partial results, {\it e.~g.} \cite{agustin,PV2}.

Let $\cA$ be a generic copointed Hopf algebra over $\ku^{\Sn_3}$; we precisely define it in Section \ref{sec:A}. In order to achieve our goals, we first demonstrate that $\cA$ is Morita equivalent to every regular ({\it i.e.} non-simple) block of the Drinfeld double $\D(T_n(\zeta))$ of a Taft algebra and also to those of the small quantum groups $u_{q}(\mathfrak{sl}_2)$ with $q^2=\zeta$ a primitive $n$-th root of unity. As a direct consequence, we obtain the classification of the indecomposable $\cA$-modules since the respective $\D(T_n(\zeta))$-modules were classified in \cite{Ch3}, and independently in \cite{ChaP,suter} for $u_{q}(\mathfrak{sl}_2)$. 

\begin{theorem}\label{teo:indecomposable}
The following list constitutes a complete set of non-isomorphic indecomposable $\cA$-modules.
\begin{enumerate}
\item The simple modules $\Le$ and $\Ls$.
\item The projective cover and injective hull  $\Pe$ and $\Pl$ of $\Le$ and $\Ls$, respectively.
\item The syzygies and cosyzygy modules $\Omega^k(\Le)$ and $\Omega^k(\Ls)$, $k\in\Z\setminus\{0\}$.
\item The $(k,k)$-type modules $M_k(\Le,\bt)$ and $M_k(\Ls,\bt)$, $k\in\N$ and $0\neq\bt\in\mathfrak{A}/\sim$.
\end{enumerate}
In particular, $\cA$ is of tame representation type.
\end{theorem}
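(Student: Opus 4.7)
My plan is to reduce the classification of indecomposable $\cA$-modules to classifications already available in the literature by establishing a Morita equivalence with a well-understood algebra. As anticipated in the introduction, I would show that $\cA$ is Morita equivalent to each non-simple block of the Drinfeld double $\D(T_n(\zeta))$ of a Taft algebra $T_n(\zeta)$, where $\zeta$ is a primitive $n$-th root of unity determined by the generic parameter of $\cA$. The indecomposable modules over such blocks were classified by Chen in \cite{Ch3}, and the analogous classification for the small quantum group $u_q(\mathfrak{sl}_2)$ with $q^{2}=\zeta$ was obtained in \cite{ChaP,suter}; the list in the statement is then the image of these classifications under the Morita functor.

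\textbf{Key steps.} First, I would analyze the block decomposition of $\cA$. Since $\cA$ has exactly two non-isomorphic simple modules $\Le$ and $\Ls$ (to be described in Section \ref{sec:A}), I expect that, modulo trivial semisimple summands, $\cA$ consists of a single non-semisimple block containing both. Next, I would establish the Morita equivalence by exhibiting a progenerator. The concrete route is to present the basic algebra of a non-simple block $B$ of $\D(T_n(\zeta))$ as a quiver with relations on the quiver with two vertices and two arrows in each direction, and then compare it with the basic algebra of $\cA$ read off from the explicit lifting relations in \cite{AV1}. An explicit isomorphism of these basic algebras produces the Morita equivalence. Transporting Chen's classification through it, the simples of $B$ correspond to $\Le$ and $\Ls$, the indecomposable projectives to $\Pe$ and $\Pl$, the iterated (co)syzygies of the simples to the modules $\Omega^{k}(\Le)$ and $\Omega^{k}(\Ls)$ for $k \in \Z\setminus\{0\}$, and the band modules over the homogeneous tubes of the Auslander--Reiten quiver to the $(k,k)$-type modules $M_k(\Le,\bt)$ and $M_k(\Ls,\bt)$. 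Since both tame representation type and the list of indecomposables are Morita invariants, the tameness of $\D(T_n(\zeta))$ transfers to $\cA$.

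\textbf{Main obstacle.} The decisive step is the construction of the Morita equivalence, which ultimately reduces to matching two presentations: the lifting relations defining $\cA$ coming from the Nichols algebra $\FK_3$ over $\ku^{\Sn_3}$, and the defining relations of the basic algebra of a non-simple block of $\D(T_n(\zeta))$. The genericity hypothesis on the parameter of $\cA$ is precisely what is needed to avoid degeneracies and to ensure that the two-vertex block structure predicted above actually arises, so that the parameter $\zeta$ can be read off unambiguously from the scalars in the lifting relations. Verifying that the quadratic and higher relations coincide on the nose, after an appropriate change of generators, is where I expect the bulk of the computational effort to concentrate.
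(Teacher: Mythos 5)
Your strategy is essentially the same as the paper's: compute the basic algebra $E=\End_{\cA}(\Pe\oplus\Pl)$, identify it with the basic algebra $B$ of the non-simple blocks of $\D(T_n(\zeta))$ (and $u_q(\mathfrak{sl}_2)$), and transport Chen's classification \cite{Ch3} of indecomposables through the resulting Morita equivalence. The paper carries out the endomorphism-algebra comparison concretely (with GAP to verify the eight relations defining $B$), and then reads the list of Theorem~\ref{teo:indecomposable} directly off \cite[Theorem 3.12]{Ch3}, which is exactly what you outline.

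There is, however, one conceptual error worth flagging. You assert that the root of unity $\zeta$ is ``determined by the generic parameter of $\cA$'' and ``can be read off unambiguously from the scalars in the lifting relations.'' This is not the case. By \cite[Proposition 3.3]{Ch3}, every non-simple block of $\D(T_m(\zeta'))$, for any $m\geq 2$ and any primitive root $\zeta'$, is Morita equivalent to one and the same $8$-dimensional basic algebra $B$; the same is true for the blocks of $u_q(\mathfrak{sl}_2)$ (see \cite[Section 5]{suter}). So there is nothing to match between the scalar $a$ of $\cA$ and a specific $\zeta$: once $E\simeq B$ is verified, $\cA$ is Morita equivalent to the non-simple blocks for \emph{all} $n\geq2$ and \emph{all} $\zeta$ simultaneously. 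Genericity of $a$ (i.e.\ $a\notin\{1,-\tfrac12,-2\}$) is only used to guarantee that $\cA$ has the two-simple, one-block structure and that certain scalars in the GAP computations are nonzero; it does not encode a root of unity. If you set up the comparison expecting a $\zeta$-dependent target, the calculation will still succeed (it will just reveal the independence), but the framing should be corrected.
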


We briefly describe the modules on the above list in Figure \ref{figure:socle}. Some of them were studied early in \cite{AV2}. Here, $\Le$ denotes the trivial one-dimensional module given by the antipode. The reader can think of $\mathfrak{A}/\sim$ as the projective line $\mathbb{P}^1$.

\begin{figure}[h]
\begin{tikzpicture}[scale=.95,every node/.style={scale=0.95}]

\node (layers) {
\begin{tikzpicture}
 
\node (Pl) {
\begin{tikzpicture}

\node (PL1) at (0,0) {\dotuline{$\quad L\quad$}};

\node[below = 0.01 of PL1] (PL2) {\vphantom{$\Ls$}\dotuline{$\quad\Le\oplus\Le\quad$}};

\node[below = .01 of PL2] (PL3) {$\Ls$};

\node[above = .01 of PL1] (PL)  {\uline{$\qquad\Pl\qquad$}};
\end{tikzpicture}
};

\node[right = .5 of Pl] (Pe) {
\begin{tikzpicture}

\node (Pe1) at (0,0) {\vphantom{$\Ls$}\dotuline{$\quad\Le\quad$}};

\node[below = .01 of Pe1] (Pe2) {\dotuline{$\quad\Ls\oplus\Ls\quad$}};

\node[below = .01 of Pe2] (Pe3) {\vphantom{$\Ls$}$\Le$};

\node[above = .01 of Pe1] (PE)  {\uline{$\qquad\Pe\qquad$}};
\end{tikzpicture}
};

\node[below = .1 of Pl] (Oeo) {
\begin{tikzpicture}

\node (Pe1) at (0,0) {\dotuline{\vphantom{$\Ls$}$\quad\mbox{\footnotesize{2(k+1)}}\Ls\quad$}};

\node[below = .01 of Pe1] (Pe2) {$\vphantom{\Ls}\mbox{\footnotesize{(2k+1)}}\Le$};

\node[above = .01 of Pe1] (PE)  {\uline{$\quad\Omega^{2k+1}(\Le)\quad$}};
\end{tikzpicture}
};

\node[right = .1 of Oeo] (Oee) {
\begin{tikzpicture}

\node (Pe1) at (0,0) {\dotuline{\vphantom{$\Ls$}$\,\mbox{\footnotesize{(2k+1)}}\Le\,$}};

\node[below = .01 of Pe1] (Pe2) {$\quad\mbox{\footnotesize{2k}}\Ls\quad$};

\node[above = .01 of Pe1] (PE)  {\uline{$\quad\Omega^{2k}(\Le)\quad$}};
\end{tikzpicture}
};

\node[right = .1 of Oee] (OLo) {
\begin{tikzpicture}

\node (Pe1) at (0,0) {\dotuline{\vphantom{$\Ls$}$\quad\mbox{\footnotesize{2(k+1)}}\Le\quad$}};

\node[below = .01 of Pe1] (Pe2) {$\mbox{\footnotesize{(2k+1)}}\Ls$};

\node[above = .01 of Pe1] (PE)  {\uline{$\quad\Omega^{2k+1}(\Ls)\quad$}};
\end{tikzpicture}
};

\node[right = .1 of OLo] (OLe) {
\begin{tikzpicture}

\node (Pe1) at (0,0) {\dotuline{\vphantom{$\Ls$}$\quad\mbox{\footnotesize{(2k+1)}}\Ls\quad$}};

\node[below = .01 of Pe1] (Pe2) {\vphantom{$\Ls$}$\mbox{\footnotesize{2k}}\Le$};

\node[above = .01 of Pe1] (PE)  {\uline{$\quad\Omega^{2k}(\Ls)\quad$}};
\end{tikzpicture}
};

\node[right = 1 of Pe] (Mke) {
\begin{tikzpicture}

\node (Pe1) at (0,0) {\dotuline{\vphantom{$\Ls$}$\quad\mbox{\footnotesize{k}}\Ls\quad$}};

\node[below = .01 of Pe1] (Pe2) {\vphantom{$\Ls$}$\mbox{\footnotesize{k}}\Le$};

\node[above = .01 of Pe1] (PE)  {\uline{$\quad M_k(\Le,\bt)\quad$}};

\node[below = .01 of PL2] (PL3) {\vphantom{$\Ls$}};

\end{tikzpicture}
};

\end{tikzpicture}
};

\node[right = .25 of layers] (iso) {
\begin{tikzpicture}

\node (Ls) {$\,\,\,\,\Ls\simeq\Ls^*$};

\node[below = .01 of Ls] (Le) {$\,\,\,\,\Le\simeq\Le^*$};

\node[below = .01 of Le] (PL) {$\,\,\Pl\simeq\Pl^*$};

\node[below = .01 of PL] (Pe) {$\,\,\,\Pe\simeq\Pe^*$};

\node[below = .01 of Pe] (Pe1) {$\Omega^{-k}(\Le)\simeq\Omega^{k}(\Le)^*$};

\node[below = .01 of Pe1] (Pe2) {$\Omega^{-k}(\Ls)\simeq\Omega^{k}(\Ls)^*$};

\node[below = .01 of Pe2] (Pe3) {$\,\,M_k(\Ls,\bt)\simeq M_k(\Le,\bt)^*$};

\end{tikzpicture}
};

\end{tikzpicture}
\caption{Loewy layers of some indecomposable modules; $kM$ denotes the direct sum of $k\in\N$  copies of $M$. The remaining ones can be deduced using the isomorphisms on the right-hand side.}
\label{figure:socle}
\end{figure}

Next, we calculate the indecomposable summands of every tensor product between indecomposable modules imitating the strategy of \cite{Ch4}. This is a case-by-case analysis. We employ an inductive argument when we tensoring with modules of the infinite families. We also need to perform some explicit computations which we carry out in GAP \cite{GAP4} as we explain in Appendix \ref{sec:GAP}. These are very long and tedious calculations to do by hand; in the worst case we have to compute actions over the tensor product of two modules of dimension $18$. Once we have these decompositions, we are ready to present the Green ring of $\cA$ by generators and relations using standard arguments. Explicitly, we demonstrate the following.

\begin{theorem}\label{teo:green ring}
The Green ring of $\cA$ is isomorphic to the commutative $\Z$-algebra generated by the elements $\lambda$, $\rho$, $\omega$, $\oomega$ and $\mu_{k,\bt}$, for all $k\in\N$ and
$\bt\in\mathfrak{A}/\sim$, subject to the relations in Table \ref{relaciones}. Moreover, the set 
\begin{align*}
\mathcal{B}=
\left\{\lambda^i,\lambda^i\rho,\lambda^i\omega^s,\lambda^i\oomega^s,\lambda^i\mu_{k,\bt}\mid i\in\{0,1\},s,k\in\N,\bt\in\mathfrak{A}/\sim\right\} 
\end{align*}
is a $\Z$-basis of the Green ring of $\cA$.
\end{theorem}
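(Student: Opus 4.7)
The plan is to present the Green ring of $\cA$, which I will denote $r(\cA)$, via a ring homomorphism $\phi\colon R\to r(\cA)$, where $R$ denotes the abstract commutative $\Z$-algebra defined by the generators $\lambda,\rho,\omega,\oomega,\mu_{k,\bt}$ and the relations of Table \ref{relaciones}. The natural assignment is $\lambda\mapsto[\Ls]$, $\omega\mapsto[\Omega(\Le)]$, $\oomega\mapsto[\Omega^{-1}(\Le)]$, $\mu_{k,\bt}\mapsto[M_k(\Le,\bt)]$, and $\rho\mapsto[\Pe]$ (or whichever projective class is singled out by the defining relation on $\rho$). Once $\phi$ is known to be a well-defined surjection with $\mathcal{B}$ spanning $R$, the theorem follows automatically from Krull--Schmidt.

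The first substantive task is to verify that $\phi$ is well defined, that is, each relation of Table \ref{relaciones} translates under the above assignment to a valid identity in $r(\cA)$. This amounts to comparing the two sides of each relation as direct sums of indecomposables, and is precisely the case-by-case output of the tensor-product computations carried out in the sections preceding the theorem (with GAP assistance for the heaviest cases, as described in Appendix \ref{sec:GAP}). Surjectivity of $\phi$ is then immediate from Theorem \ref{teo:indecomposable}: every indecomposable class lies in $\phi(\mathcal{B})$, since $[\Le]=1$, $[\Ls]=\lambda$, the classes $[\Omega^k(\Le)]$ and $[\Omega^k(\Ls)]$ are covered by $\lambda^i\omega^s$ and $\lambda^i\oomega^s$ (with $\rho$ taking care of $\Pe$ and $\Pl$), and $[M_k(\Le,\bt)]$, $[M_k(\Ls,\bt)]$ are covered by $\mu_{k,\bt}$ and $\lambda\mu_{k,\bt}$.

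For injectivity and the basis claim, I would show that $\mathcal{B}$ spans $R$ as a $\Z$-module by iteratively applying the relations of Table \ref{relaciones} to rewrite an arbitrary monomial in the generators as a $\Z$-linear combination of elements of $\mathcal{B}$. A $\lambda^2=1$-type relation eliminates higher powers of $\lambda$; relations binding $\rho$ to $\omega,\oomega$ and to the $\mu_{k,\bt}$ collapse every $\rho$-product into basis form; and the relations for $\mu_{k,\bt}\cdot\mu_{k',\bt'}$, $\mu_{k,\bt}\cdot\omega^s$, $\mu_{k,\bt}\cdot\oomega^s$ express these products as $\Z$-combinations of $\omega^s,\oomega^s,\mu_{k'',\bt''}$, each possibly multiplied by $\lambda$. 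Since $r(\cA)$ is a free $\Z$-module on the set of isomorphism classes of indecomposables (by Krull--Schmidt) and $\phi$ sends $\mathcal{B}$ bijectively onto this set, a spanning set of cardinality $|\mathcal{B}|$ in $R$ forces $\mathcal{B}$ to be a $\Z$-basis and $\phi$ to be an isomorphism.

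The hardest step is the spanning argument for products of the form $\mu_{k,\bt}\cdot\mu_{k',\bt'}$, whose decompositions are long sums mingling the $\mu$-family with $\omega$- and $\oomega$-powers: a careful induction on $k+k'$ (or on a finer compound degree taking the class $[\bt]\in\mathfrak{A}/\sim$ into account) is needed to ensure the reduction terminates inside $\Z\cdot\mathcal{B}$ without producing extraneous monomials. This parallels the very induction that produced those decompositions in the first place, so once $\phi$ is shown to be well defined and surjective, consistency of the reduction is guaranteed in principle; the remaining work is the bookkeeping across the equivalence classes of $\bt$-parameters indexing the infinite family $\mu_{k,\bt}$.
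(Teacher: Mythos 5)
Your overall strategy — construct a surjection $\phi\colon R\to r(\cA)$ from the abstract presented algebra, show $\mathcal{B}$ spans $R$ by reduction, and conclude with Krull--Schmidt — is exactly the paper's. But there is a real gap in the linear-independence half. You assert that ``$\phi$ sends $\mathcal{B}$ bijectively onto'' the set of isomorphism classes of indecomposables. This is false: for $s\geq 2$, $\phi(\omega^s)=[\Omega(\Le)]^s\neq[\Omega^s(\Le)]$, since by \eqref{iso:Omega e ot Omega e} the power $\omega^s$ differs from $[\Omega^s(\Le)]$ by projective summands; likewise $[\Pe]=\lambda\rho-4\rho$ is not any single element of $\phi(\mathcal{B})$. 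So $\phi(\mathcal{B})$ is not the set of indecomposable classes, and the claim that a spanning set of the right cardinality is automatically a basis is not available to you in this form.

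What is actually needed, and what the paper does, is to verify that $\phi(\mathcal{B})$ is itself a $\Z$-basis of $r(\cA)$ by exhibiting a unitriangular change of basis from $\phi(\mathcal{B})$ to the indecomposable classes: concretely, one shows $[\Pe]\in\Z\lambda\rho+\Z\rho$, $[\Omega^s(\Le)]\in\omega^s+\sum_{i<s}\Z[\lambda,\rho]\,\omega^i$, $[\Omega^s(\Ls)]\in\lambda\omega^s+\sum_{i<s}\Z[\lambda,\rho]\,\omega^i$, the dual statements for $\oomega$, and $[M_k(\Ls,\bt)]=-2k\rho+\lambda\mu_{k,\bt}$, each deduced from the tensor-product isomorphisms. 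Once $\phi(\mathcal{B})$ is known to be linearly independent (because it maps triangularly onto the basis of indecomposables), your spanning argument for $\mathcal{B}\subset R$ finishes the proof: any $\Z$-dependence in $R$ would push forward to one in $r(\cA)$. So your proposal is salvageable, but the ``bijectivity'' step must be replaced by this explicit triangularity computation; it is not automatic.
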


\begin{table}[h]
\caption{Generators of the Green ring of $\cA$}\label{generadores}
\def\arraystretch{1.6}
\begin{tabular}{lccccr}\hline
$1:=[\Le]$&$\lambda:=[\Ls]$&$\rho:=[\Pl]$&$\omega:=[\Omega(\Le)]$&$\oomega:=[\Omega^{-1}(\Le)]$&
$\mu_{k,\bt}:=[M_k(\Le,\bt)]$
\\
&&&&&{\footnotesize $\forall k\in\N$, 
$\bt\in\mathfrak{A}/\sim$}
\\
\hline
\end{tabular}
\end{table}

We emphasize that our results provide examples of non-quasitriangular Hopf algebras  \cite[Proposition 30]{AV2} with commutative Green ring, and these form an infinite family of non-isomorphic Morita equivalent Hopf algebras with isomorphic Green ring (this does not necessarily imply that the representation categories are monoidally equivalent\footnote{even in the semisimple case, see {\it e.g.} \cite{tambara}.} but we were not able to see whether or not they are). Moreover, the categories of comodules of this family are monoidally equivalent each other \cite[Proposition 29]{AV2} and hence these Hopf algebras are cocycle deformations each other by \cite[Corollary 5.9]{Schauenburg}. However, not all cocycle deformations of these Hopf algebras are Morita equivalent. Indeed, the non-generic liftings of $\FK_3$ over $\ku^{\Sn_3}$ are also cocycle deformations of the generic liftings but their representation categories are quite different, for instance, their blocks have three or six simple modules, cf. \cite{AV2}. We also observe that $\cA$ is a spherical Hopf algebra and the corresponding quotient category is  monoidally equivalent to the category of $C_2\times\Z$-graded finite-dimensional vector spaces, see Corollary \ref{cor:spherical}.

\begin{table}[h]
\caption{Defining relations of the Green ring of $\cA$}\label{relaciones}
\def\arraystretch{1.6}
\begin{tabular}{lllrclr}\hline
\refstepcounter{rel} (\therel)\label{ring:L ot L}
&&&$\lambda^2$&$=$&$1+2\rho$
\\
\refstepcounter{rel} (\therel)
\label{ring:proj ot L}
&&&$\rho^2$&$=$&$2\rho+2\lambda\rho$
\\
\refstepcounter{rel} (\therel)
\label{ring: omega e ot Pl}
&&&$\omega\rho$&$=$&$\rho+2\lambda\rho$
\\
\refstepcounter{rel} (\therel)
\label{ring: omega -e ot Pl}
&&&$\oomega\rho$&$=$&$\rho+2\lambda\rho$
\\
\refstepcounter{rel} (\therel)
\label{ring:omega e ot omega -e}
&&&$\oomega\omega$&$=$&$1+10\rho$
\\
\refstepcounter{rel} (\therel)
\label{ring:mu k t ot Pl}
&&&$\mu_{k,\bt}\rho$&$=$&$k\rho+k\lambda\rho$
\\
\refstepcounter{rel} (\therel)
\label{ring:mu k t ot omega e}
&&&$\mu_{k,\bt}\omega$&$=$&$k\lambda\rho+\mu_{k,\bt}$
&{\footnotesize $\forall\bs,\bt\in\mathfrak{A}$}
\\
\refstepcounter{rel} (\therel)
\label{ring:mu k t ot omega -e}
&&&$\mu_{k,\bt}\oomega$&$=$&$3k\rho+\lambda\mu_{k,\bt}$
&{\footnotesize $\bs\not\sim\bt$}
\\
\refstepcounter{rel} (\therel)
\label{ring:mu k t ot mu j s}
&&&$\mu_{k,\bt}\mu_{j,\bs}$&$=$&$-2jk\rho+jk\lambda\rho$
&{\footnotesize $\forall j,k\in\N$}
\\
\refstepcounter{rel} (\therel)\label{ring:mu k t ot mu j t}
&&&
$\mu_{k,\bt}\mu_{j,\bt}$&$=$&$-2(j-1)k\rho+(j-1)k\lambda\rho+\mu_{k,\bt}+\lambda\mu_{k,\bt}$
&{\footnotesize $k\leq j$}
\\
\hline
\end{tabular}
\end{table}

% \begin{align}
% \label{ring:L ot L}
% \lambda^2&=1+2\rho;\\
% \label{ring:proj ot L}
% \rho^2&=2\rho+2\lambda\rho;\\
% \label{ring:omega e ot omega -e}
% \omega\oomega&=1+10\rho;\\
% \label{ring: omega e ot Pl}
% \omega\rho&=\lambda\rho+5\rho;\\
% \label{ring: omega -e ot Pl}
% \oomega\rho&=\lambda\rho+5\rho;\\
% \label{ring:mu k t ot Pl}
% \mu_{k,\bt}\rho&=k\lambda\rho+k\rho;
% \\
% \label{ring:mu k t ot omega e}
% \mu_{k,\bt}\omega&=k\lambda\rho+\mu_{k,\bt};
% \\
% \label{ring:mu k t ot omega -e}
% \mu_{k,\bt}\oomega&=3k\lambda\rho+\lambda\mu_{k,\bt};
% \\
% \label{ring:mu k t ot mu j s}
% \mu_{k,\bt}\mu_{j,\bs}&=kj\lambda\rho-2kj\rho;\\
% \label{ring:mu k t ot mu j t}
% \mu_{k,\bt}\mu_{j,\bt}&=(j-1)k\lambda\rho-2k(j-1)\rho+\lambda\mu_{k,\bt}+\mu_{k,\bt}.
% \end{align}

We conclude this introduction by comparing our work with Chens's paper \cite{Ch4} which describes the Green ring of $\D(T_{2}(-1))$. This Drinfeld double has two singular ({\it i.e.} simple) blocks and only one which is regular. The latter turns out to be the representation category of a book Hopf algebra $\mathbf{h}$ \cite{AS}. Therefore $\mathbf{h}$ is Morita equivalent to $\cA$ and its indecomposable modules are classified as in Theorem \ref{teo:indecomposable}. The two simple modules of $\mathbf{h}$ are one-dimensional, say $\tilde\Ls$ and the trivial one $\tilde\Le$. Then $\tilde\Ls\ot\tilde\Ls\simeq\tilde\Le$ and tensoring with $\tilde\Ls$ induces an involution on the family of indecomposable modules. Instead, our non-trivial simple module $\Ls$ is of dimension $5$ and $\Ls\ot\Ls\simeq\Le\oplus4\Pl$. This projective summand is then propagated through the remaining tensor products. This makes the computations more difficult than in \cite{Ch4}. It is like $\Rep\cA$ is a deformation of $\Rep\mathbf{h}$ over the projective modules. It would be interesting to develop a method to construct new tensor categories by deforming a known one as it occurs in the present situation.

\

The article is organized as follows. In Section \ref{sec:A} we introduce the generic copointed Hopf algebras over $\ku^{\Sn_3}$. In Section \ref{sec:repA}, we prove that they are Morita equivalent to every regular block of $\D(T_n(\zeta))$ and $u_{q}(\mathfrak{sl}_2)$, and describe their indecomposable modules in detail. The bulk of our work is in Section \ref{sec:greenA} where we address the tensor products of every pair of indecomposable modules and prove Theorem \ref{teo:green ring}. In Appendix \ref{sec:GAP}, we explain how we use GAP in our calculations.

\subsection*{Acknowledgments}

I thank Nicol\'as Andruskiewitsch for the stimulating discussions; Corollary \ref{cor:spherical} answers a question asked by him. I am grateful to the referee for his/her interesting comments and suggestions. I thank Martin Mombelli for pointing out the reference \cite{tambara}.

\section{The family of copointed Hopf algebras}\label{sec:A} 

We work over an algebraically closed field $\ku$ of characteristic zero. Let $\ku^{\Sn_3}$ be the algebra of function on the symmetric group $\Sn_3$. We denote $\delta_g$ the characteristic function of $g\in\Sn_3$ and $e$ the neutral element of $\Sn_3$. We recall that $\ku^{\Sn_3}$ is a Hopf algebra where the comultiplication, antipode and counit of each $\delta_g$ are
\begin{align*}
\Delta(\delta_g)=\sum_{h\in\Sn_3}\delta_h\ot\delta_{h^{-1}g}\quad S(\delta_g)=\delta_{g^{-1}}\quad\mbox{and}\quad\varepsilon(\delta_g)=\delta_g(e).\end{align*}

We define 
\begin{align}\label{eq:set A}
\mathfrak{A}=\left\{(\aij{(23)},\aij{(12)},\aij{(13)})\in\ku^3\mid \aij{(23)}+\aij{(12)}+\aij{(13)}=0\right\}. 
\end{align}
We will consider in $\mathfrak{A}$ the equivalence relation $\bt\sim\bs\Leftrightarrow\bt=\lambda\bs$ for some $\lambda\in\ku^\times$.

We fix $\ba\in\mathfrak{A}$ and for each transposition $(ij)$, we  set
\begin{align*}%\label{eq:fij}
\fij{ij}=\sum_{g\in\Sn_3}(\aij{(ij)}-\aij{g^{-1}(ij)g})\delta_g\in\ku^{\Sn_3}.
\end{align*}
The Hopf algebra $\cA_{[{\bf a}]}$ defined in \cite[Definition 3.4]{AV1} is generated by $\xij{12}$, $\xij{23}$, $\xij{13}$ and $\delta_g$, for all $g\in\Sn_3$, subject to the relations
\begin{align*}
\xij{ij}^2=\fij{ij},&\quad \delta_g\xij{ij}=\xij{ij}\delta_{(ij)g},\quad\delta_g\delta_h=\delta_g(h)\delta_g,\\
\xij{23}&\xij{12}+\xij{13}\xij{23}+\xij{12}\xij{13}=0,\\
\xij{12}&\xij{23}+\xij{23}\xij{13}+\xij{13}\xij{12}=0
\end{align*}
for all $(ij),g,h\in\Sn_3$.  The comultiplication, the antipode and the counit of each generator $\xij{ij}$ are
\begin{align*}
\Delta(\xij{ij})&=\xij{ij}\ot1 +\sum_{h\in\Sn_3}\sgn(h)\,\delta_h\ot x_{h^{-1}(ij)h},\\
S(\xij{ij})&=-\sum_{h\in\Sn_3}\sgn(h)\, x_{h^{-1}(ij)h}\,\delta_{h^{-1}(ij)}\quad\mbox{and}\quad\varepsilon(\xij{ij})=0.
\end{align*}
The elements $\delta_g$, $g\in\Sn_3$, generate a Hopf subalgebra isomorphic to $\ku^{\Sn_3}.$

The dimension of $\cA_{[{\bf a}]}$ is $72$ and the elements $x\delta_g$, $g\in\Sn_3$ and $x\in\B$, with
\begin{align*}
\B:=
\left\{
\begin{matrix}
1, &\xij{13}, &\xij{13}\xij{12}, &\xij{13}\xij{12}\xij{13}, &\xij{13}\xij{12}\xij{23}\xij{12},\\
   &\xij{23}, &\xij{12}\xij{13}, &\xij{12}\xij{23}\xij{12},\\
   &\xij{12}, &\xij{23}\xij{12}, &\xij{13}\xij{12}\xij{23},\\
   &          &\xij{12}\xij{23}
\end{matrix}
\right\},
\end{align*}
form a basis. 

The following elements will play a distinguished role
\begin{align}\label{eq:xtop}
\xtop:=&\xij{13}\xij{12}\xij{23}\xij{12},\\
\label{eq:xsoc}
\xsoc:=&(-1-2a)(1-a)-\xtop;\\ 
\label{eq:xLt}
\xLt:=&-t_{(12)}\xij{13}\xij{12}+t_{(13)}\xij{12}\xij{23};\\
\label{eq:xet}
\xet:=&-t_{(12)}\xij{12}\xij{13}+t_{(13)}\xij{23}\xij{12};
\end{align}
where $\bt=(\tij{(23)},\tij{(12)},\tij{(13)})\in\mathfrak{A}$.

It holds that $\cA_{[\ba]}\simeq\cA_{[\mathbf{b}]}$ if and only if $\mathbf{b}=\lambda(\aij{\theta(23)},\aij{\theta(12)},\aij{\theta(13)})$ for a permutation $\theta$ of the transpositions and a non-zero scalar $\lambda$, and this gives a classification of the copointed Hopf algebras over $\ku^{\Sn_3}$, see \cite[Theorem 3.5]{AV1}. The representation theory of $\cA_{[{\bf a}]}$ depends on the number of scalars $\aij{(ij)}$ which are equal, cf. \cite{AV2}. In the present work, we will study the generic case, {\it i.~e.} when the three scalars are different. Thus, without loss of generality, we will adopt the following convention.

\begin{convention}\label{convention}
We fix $a\in\ku\setminus\{1,-\frac{1}{2},-2\}$ and set $\cA:=\cA_{[(1,a,-1-a)]}$. From now on, by module we mean left $\cA$-module.
\end{convention}

\section{The category of \texorpdfstring{$\cA$}{A}-modules}\label{sec:repA}

Here, we first recall from \cite{AV2} the simple and projective modules. Using them we prove the Morita equivalence announced in the introduction. Next, we describe in detail the remaining indecomposable modules.

\subsection{\texorpdfstring{$\Sn_3$}{S3}-degree}

We will use the fact that any module $M$ is by restriction a $\ku^{\Sn_3}$-module, or equivalently, a $\Sn_3$-graded module. Explicitly, the homogeneous component $M[g]$ of degree $g\in\Sn_3$ is the subspace of $M$ spanned by the elements $m\in M$ such that $\delta_h\cdot m=\delta_h(g)\,m$. We point out that $(M\ot N)[g]=\oplus_{h\in\Sn_3}M[h]\ot N[h^{-1}g]$. We denote $\ku_g$ the one-dimensional $\ku^{\Sn_3}$-module concentrated in degree $g\in\Sn_3$.

\subsection{Simple modules}

There are only two simple modules \cite[Theorem 1]{AV2}. By abuse of notation, we denote $\Le$ the simple module determined by the counit.

We denote $L$ the non-trivial simple module. It is five-dimensional with basis $\{v_g\mid e\neq g\in\Sn_3\}$. The action of $\cA$ is determined by
\begin{align}\label{eq:action of A}
v_g\in\Ls[g]\quad\mbox{and}\quad
\xij{ij}\cdot v_g=\begin{cases}
    v_{(ij)g}&\mbox{if $\sgn(g)=1$},\\               
    \fij{ij}(g)\,v_{(ij)g}&\mbox{if $\sgn(g)=-1$.}
    \end{cases}
\end{align}

Clearly, the simple modules are self-dual:
\begin{align*}%\label{eq:simple self dual}
\Le^*\simeq\Le\quad\mbox{and}\quad\Ls^*\simeq\Ls. 
\end{align*}

\subsection{Projective modules}\label{subsec:projective}

Given $g\in\Sn_3$, we consider the induced module
\begin{align*}
M_g:=\Ind_{\ku^{\Sn_3}}^{\cA}(\ku_g)=\cA\ot_{\ku^{\Sn_3}}\ku_g.
\end{align*}
Equivalently, $M_g$ is the ideal $\cA\delta_g$. Thus, $\{x\delta_g\mid x\in\B\}$ is a basis, $\dim M_g=12$ and any morphism $f:M_g\longrightarrow N$ is determined by its value on $\delta_g$.

We can compute easily the $\Sn_3$-degree of these basis elements. In fact, using the commutating relations we see that
$
\xij{i_1j_1}\cdots\xij{i_lj_l}\delta_g\in M_{g}[(i_1j_1)\cdots(i_lj_l)g] 
$. It follows that $\dim(M_g[h])=2$ for all $h\in\Sn_3$.

By \cite[Lemma 7]{AV2}, $M_g\simeq M_h$ if $g\neq e\neq h$.
 Moreover, by \cite[Theorem 1]{AV2},
\begin{align*}
\Pe:=M_e\quad\mbox{and}\quad\Pl:=M_{(132)} 
\end{align*}
are the projective covers and injective hulls of $\Le$ and $\Ls$, respectively. In particular,
\begin{align*}
\soc(\Pe)&=\cA\cdot(\xtop\delta_e)\simeq\Le\simeq\head(\Pe)\quad\mbox{and}\\
\soc(\Pl)&=\cA\cdot(\xsoc\delta_{\Ls})\simeq\Ls\simeq\head(\Pl),\quad\mbox{where $\delta_\Ls:=\delta_{(132)}$}.
\end{align*}
The generators of the socles were given in
\cite[Lemmas 10 and 13]{AV2}. It follows that the projective modules are self-dual:
\begin{align*}%\label{eq:proj self dual}
\Pe^*\simeq\Pe\quad\mbox{and}\quad\Pl^*\simeq\Pl. 
\end{align*}

The following lemma will be useful.

\begin{lema}\label{le:proje direct summand}
Let $M$ be a module and $m\in M[g]$.
\begin{enumerate}
 \item If $g=(132)$ and  $\xsoc\cdot m\neq0$, then $\cA\cdot m\simeq\Pl$ is a direct summand of $M$.
 \item If $g=e$ and  $\xtop\cdot m\neq0$, then $\cA\cdot m\simeq\Pe$ is a direct summand of $M$.
\end{enumerate}

\end{lema}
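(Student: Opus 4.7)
The plan is to invoke the universal property of the induced modules $M_g = \Ind_{\ku^{\Sn_3}}^{\cA}(\ku_g)$ together with the structural facts about $\Pe$ and $\Pl$ already recorded in the subsection on projective modules. The core observation is that both $\Pe$ and $\Pl$ are indecomposable with \emph{simple socle}, namely $\soc(\Pe) = \cA\cdot(\xtop\delta_e) \simeq \Le$ and $\soc(\Pl) = \cA\cdot(\xsoc\delta_\Ls) \simeq \Ls$ with $\delta_\Ls = \delta_{(132)}$. In any indecomposable module with simple socle, every non-zero submodule necessarily contains that socle.

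For part (2), the hypothesis $m\in M[e]$ means that $\ku\cdot m$ is a copy of $\ku_e$ inside $M$ as a $\ku^{\Sn_3}$-submodule. The induction-restriction adjunction $\Hom_{\cA}(\Pe,M) \simeq \Hom_{\ku^{\Sn_3}}(\ku_e, M)$ then produces a unique $\cA$-linear map $\phi\colon \Pe \to M$ with $\phi(\delta_e) = m$, whose image is precisely $\cA\cdot m$. If $\phi$ were not injective, then $\ker\phi$ would be a non-zero submodule of $\Pe$ and would therefore contain the socle generator $\xtop\delta_e$, giving $\xtop\cdot m = \phi(\xtop\delta_e) = 0$, contradicting the hypothesis. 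Hence $\phi$ is injective and $\cA\cdot m \simeq \Pe$. Part (1) is entirely analogous, with $\Pe$, $\Le$ and $\xtop\delta_e$ replaced by $\Pl$, $\Ls$ and $\xsoc\delta_\Ls$.

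To conclude that $\cA\cdot m$ is a direct summand of $M$, I would simply invoke that the paper already identifies $\Pe$ and $\Pl$ as the injective hulls of $\Le$ and $\Ls$; hence they are injective $\cA$-modules (alternatively, because $\cA$ is a finite-dimensional Hopf algebra it is Frobenius, so every projective module is injective). An injective submodule of any module splits off, so $\cA\cdot m \simeq \Pe$ (resp.\ $\Pl$) appears as a direct summand of $M$.

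There is no real obstacle here: the proof is a three-line assembly of standard facts (induction-restriction adjunction, simple socle forcing injectivity of non-zero maps out of the indecomposable projectives, and injective submodules splitting off). The only subtle point to record along the way is the $\cA$-linearity identity $\phi(\xtop\delta_e) = \xtop\cdot\phi(\delta_e) = \xtop\cdot m$ (and its counterpart with $\xsoc\delta_\Ls$), which is exactly what lets the hypothesis be translated into non-vanishing of $\phi$ on the socle.
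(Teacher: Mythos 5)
Your proof is correct and follows essentially the same argument as the paper: use the induction–restriction adjunction to obtain a map out of $\Pe$ (resp.\ $\Pl$) onto $\cA\cdot m$, observe that the hypothesis forces the map to be non-zero on the simple socle so the kernel (which would otherwise contain the socle) must vanish, and then split off the image using that $\Pe$ and $\Pl$ are injective. The only cosmetic difference is that you spell out explicitly the step ``non-zero kernel contains the simple socle,'' which the paper leaves implicit.
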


\begin{proof}
Since $\Pl=M_{(132)}$ is induced from $\ku_{(132)}$, there exists a morphism $F:\Pl\rightarrow M$ such that $F(\delta_{\Ls})=m$. As $F(\xsoc\delta_{\Ls})\neq0$, $F$ is not zero in $\soc(\Pl)$ which is simple. Then $F$ is a monomorphism and (1) follows because $\Pl$ is injective. The proof of (2) is similar.
\end{proof}

\subsection{Morita equivalence}\label{subsec:morita}

Let $\D(T_n(\zeta))$ be the Drinfeld double of a Taft algebra $T_n(\zeta)$ where $\zeta$ is a primitive root of unity of order $n\geq2$.  Chen  showed that the blocks of $\D(T_n(\zeta))$ are arranged in two Morita equivalence classes: the class of regular blocks and the class of singular ones \cite[Proposition 3.1]{Ch3}. Moreover, each regular block is Morita equivalent to any regular block of $\D(T_m(\zeta'))$ for any $m\geq2$ \cite[Proposition 3.3]{Ch3}. The blocks of the Frobenius-Lusztig kernel $u_q(\mathfrak{sl}_2)$, with $q^2=\zeta$, satisfy the same properties, see for instance \cite[Section 5]{suter}.  These facts are proved in {\it loc.~cit.} by computing the basic algebra of each block. More precisely, the basic algebra corresponding to the regular blocks is the following.

\begin{theorem}[\cite{Ch3,suter}]
The regular blocks of $\D(T_n(\zeta))$ and $u_q(\mathfrak{sl}_2)$ are Morita equivalent to the algebra $B$ generated by $e_i$, $u_i$, $w_i$, with $i\in\{1,2\}$, subject to the relations
\begin{align*}
e_ie_j=\delta_{i,j}e_i,\quad u_1w_1&=u_2w_2,\quad w_1u_1=w_2u_2,\\ e_2u_i=u_ie_1=u_i,&\quad w_je_2=e_1w_j=w_j,\\
u_i^2=w_j^2=u_iu_j=w_iw_j=u_iw_j=&w_ju_i=u_ie_2=e_1u_i=e_2w_j=w_je_1=0,
\end{align*}
for all $i,j\in\{1,2\}$ with $i\neq j$. It holds that $\dim B=8$.
\qed
\end{theorem}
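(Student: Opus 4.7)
The theorem is recalled from \cite{Ch3} and \cite{suter}, so my plan is to reproduce its essential structural argument. Two algebras are Morita equivalent if and only if they have isomorphic basic algebras, so it suffices to exhibit, for any non-simple block of $\D(T_n(\zeta))$ (resp.\ $u_q(\mathfrak{sl}_2)$), an isomorphism between its basic algebra and the presented algebra $B$.

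First I would recall the block decomposition of each of these Hopf algebras. Using the classification of simple modules (highest weight theory for $u_q(\mathfrak{sl}_2)$; the analogous description for $\D(T_n(\zeta))$ from \cite{Ch3}), I would determine which blocks are simple and verify that every non-simple block contains exactly two isomorphism classes of simple modules, say $S_1$ and $S_2$. The key technical input is then the Loewy structure of the projective covers $P_1, P_2$ of $S_1, S_2$ inside such a block: one shows that each $P_i$ has Loewy length three with head and socle isomorphic to $S_i$ and middle radical layer $S_j\oplus S_j$, where $\{i,j\}=\{1,2\}$. For $u_q(\mathfrak{sl}_2)$ this follows from the linkage principle together with the explicit structure of the baby Verma modules; for $\D(T_n(\zeta))$ one combines Chen's construction of the simples with the explicit multiplication in the Drinfeld double. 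This Loewy structure pins down all composition-factor multiplicities in $P_i$, giving $\dim \Hom(P_i,P_j) = 2$ for all $i,j\in\{1,2\}$, and consequently $\dim B = 8$ for $B := \End(P_1\oplus P_2)^{\op}$.

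Finally I would extract generators and relations. Let $e_i\in B$ be the idempotent projecting onto $P_i$. The two copies of $S_2$ in the second radical layer of $P_1$ yield two linearly independent maps $u_1,u_2\colon P_1 \to P_2$ (by injectivity of $P_2$, or equivalently projectivity of $P_1$), and dually one obtains $w_1,w_2\colon P_2 \to P_1$; together with $e_1,e_2$ these generate $B$ since $e_iBe_j$ has the expected dimension $2$ in each case. Because $\rad^3 B = 0$ the triple products $u_iu_j$ and $w_iw_j$ vanish. The compositions $w_iu_j$ and $u_iw_j$ lie in $e_kBe_k\cap \rad B$, a one-dimensional space; a careful normalization of the $u_i,w_i$ then produces the equalities $u_1w_1 = u_2w_2$, $w_1u_1 = w_2u_2$ together with the vanishings $u_iw_j = w_ju_i = 0$ for $i\neq j$. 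A spanning argument for the algebra presented by these relations shows its dimension is at most $8$, and the natural surjection onto $B$ is then an isomorphism by the dimension count.

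The main obstacle of the plan lies in the structural input: the explicit determination of the Loewy layers of the indecomposable projectives in a non-simple block, particularly for $\D(T_n(\zeta))$, where one has to work concretely with the Drinfeld double multiplication and match it with the description of the simples. Once this is in hand, the passage to the basic algebra and the verification of the presentation is standard.
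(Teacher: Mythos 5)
The paper does not prove this theorem; it is recalled from \cite{Ch3} and \cite{suter} with a \verb|\qed|. Your outline correctly reproduces the standard approach of those references: identify the non-simple blocks, establish that each contains exactly two simples whose projective covers have Loewy length three with head and socle $S_i$ and middle radical layer $2S_j$, compute $\dim\Hom(P_i,P_j)=2$ for all $i,j$ to get $\dim B=8$, and then read off the presentation of $B=\End(P_1\oplus P_2)^{\op}$. This is essentially the same route the cited sources follow, so there is no meaningfully different strategy here.

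One small slip in the justification: you write that $u_iu_j=0$ and $w_iw_j=0$ ``because $\rad^3 B=0$.'' These are products of only two radical elements and hence live in $\rad^2 B$, which is two-dimensional and nonzero, so Loewy length three alone does not force them to vanish. The actual reason is the idempotent structure: from the relations $u_i=e_2u_ie_1$ one has $u_iu_j=e_2u_ie_1\,e_2u_je_1=0$ since $e_1e_2=0$, and likewise $w_iw_j=0$. The nontrivial content is concentrated, as you say, in normalizing so that $u_1w_1=u_2w_2\neq 0$, $w_1u_1=w_2u_2\neq 0$, and the cross terms $u_iw_j=w_ju_i=0$ for $i\neq j$; this is exactly what one checks by hand (or reads off from the explicit bases in \cite{Ch3,suter}), and your sketch of that step is correct in spirit.
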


The case $n=2$ has a peculiarity. Explicitly, $\mathcal{D}(T_2(-1))$ and $u_q(\mathfrak{sl}_2)$ has only one regular block and it is closed under tensor products.  This is a consequence of $B$ being a quotient of $\mathcal{D}(T_2(-1))$ and $u_q(\mathfrak{sl}_2)$ by a central group-like element as we explain next. Instead, we point out that the regular blocks of $\mathcal{D}(T_n(\zeta))$ and $u_q(\mathfrak{sl}_2)$ for $n>2$ are not closed under tensor products, cf. \cite{Ch2,kondosaito}.

Let $\mathbf{h}$ be the book Hopf algebra \cite{AS} generated by $G$, $X$ and $Y$ subject to the relations $G^2=1$, $X^2=Y^2=0$, $GX=-XG$, $GY=-YG$ and $XY=YX$, with comultiplication
\begin{align*}
\Delta(G)=G\ot G,\quad\Delta(X)=X\ot G+1\ot X,\quad\Delta(X)=Y\ot 1+G\ot Y. 
\end{align*}

It turns out that $B$ is isomorphic to $\mathbf{h}$ as an algebra by letting $G=e_1-e_2$, $X=u_1-w_2$ and $Y=w_1-u_2$. Thus, $B$ inherits a Hopf algebra structure and we have the following.

\begin{prop}\label{prop:u-1sl2}
Let $q$ be a primitive root of unity of order $2$ and $C_2$ the cyclic group of order $2$. Then there exist central extensions of Hopf algebras
\begin{align*}
\ku\rightarrow\ku C_2\rightarrow\mathcal{D}(T_2(-1))\rightarrow\mathbf{h}\rightarrow\ku
\quad\mbox{and}\quad
\ku\rightarrow\ku C_2\rightarrow u_q(\mathfrak{sl}_2)\rightarrow\mathbf{h}\rightarrow\ku.
\end{align*}
Moreover, the category of representations of $\mathbf{h}$ is monoidally equivalent to the regular block of both $\mathcal{D}(T_2(-1))$ and $u_q(\mathfrak{sl}_2)$.
\end{prop}

\begin{proof}
We recall that $\mathcal{D}(T_2(-1))$ is generated by $a,b,c,d$ subject to the relations $ba=-ab$, $bd=-db$, $ca=-ac$, $dc=-cd$, $bc=cb$, $a^2=d^2=0$, $b^2=c^2=1$ and $da+ad=1-bc$; where $b$ and $c$ are group-like elements and $a$ and $d$ are skew-primitive elements, see for instance \cite[page 1460]{Ch4}. Then the Hopf subalgebra generated by $bc$ gives the desired extension.

In the case of $u_q(\mathfrak{sl}_2)$, this is generated by $E,K,F$ subject to the relations $K^4=1$, $KE=q^2EK$, $KF=q^{-2}FK$, $K^2=0=F^2$ and $EF-FE=\frac{K-K^{-1}}{q-q^{-1}}$; where $K$ is a group-like element and $E$ and $F$ are skew-primitive elements, see for instance \cite{suter}. Then the desired extension is given by considering the element $K^2$.

In consequence, these extensions give rise to monoidal functors from $\Rep\mathbf{h}$ to $\Rep\mathcal{D}(T_2(-1))$ and to $\Rep u_q(\mathfrak{sl}_2)$, respectively. It is not difficult to realise that these functors are equivalences with the respective regular blocks.
\end{proof}

We can appreciate from the description of its simple and projective modules that $\cA$ has only one block. We next show that the corresponding basic algebra is isomorphic to $B$.

\begin{prop}
$\cA$ is Morita equivalent to $\mathbf{h}$ and to the regular blocks of $\D(T_n(\zeta))$ and $u_q(\mathfrak{sl}_2)$ for any $n\geq2$. In particular, the indecomposable $\cA$-modules are classified by Theorem \ref{teo:indecomposable}.
\end{prop}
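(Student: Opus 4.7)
The strategy is to compute the basic algebra of $\cA$ and show that it is isomorphic to $B$. By transitivity of Morita equivalence, together with the cited theorem of Chen and Suter and the algebra isomorphism $B\simeq\mathbf{h}$ noted in Section \ref{subsec:morita}, this yields all three asserted Morita equivalences. The classification in Theorem \ref{teo:indecomposable} then transfers from the indecomposable classifications for $\D(T_n(\zeta))$ in \cite{Ch3} and for $u_q(\mathfrak{sl}_2)$ in \cite{ChaP,suter}.

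As a preliminary I would verify that $\cA$ is a single block with Cartan matrix $\bigl(\begin{smallmatrix} 2 & 2\\ 2 & 2\end{smallmatrix}\bigr)$, matching that of $B$. Under restriction to $\ku^{\Sn_3}$ the simples decompose as $\Le\simeq\ku_e$ and $\Ls\simeq\bigoplus_{g\neq e}\ku_g$. Hence, if $\Pe$ has composition factors $a$ copies of $\Le$ and $b$ copies of $\Ls$, comparing graded dimensions gives $\dim\Pe[e]=a$ and $\dim\Pe[g]=b$ for $g\neq e$. Since $\dim\Pe[h]=2$ for every $h\in\Sn_3$ (Section \ref{subsec:projective}), one reads off $a=b=2$; analogously for $\Pl$. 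Both projectives thus contain both simples, so $\cA$ is indecomposable as an algebra.

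Next, the basic algebra $\Gamma:=\End_\cA(\Pe\oplus\Pl)^{\op}$ satisfies $\dim\Gamma=8=\dim B$. Indeed, since $M_g\simeq\Ind_{\ku^{\Sn_3}}^{\cA}\ku_g$, Frobenius reciprocity yields $\Hom_\cA(M_g,N)\simeq N[g]$ for any $\cA$-module $N$, so each of the four Hom-spaces between $\Pe$ and $\Pl$ has dimension $2$.

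The core of the proof is to exhibit an explicit algebra isomorphism $\Gamma\simeq B$. I would select bases of $\Hom_\cA(\Pe,\Pl)\simeq\Pl[e]$ and $\Hom_\cA(\Pl,\Pe)\simeq\Pe[(132)]$ from among the elements $\{x\delta_g:x\in\B\}$, declaring them to play the roles of $u_1,u_2$ and $w_1,w_2$; the identity endomorphisms then correspond to $e_1,e_2$, while the unique non-identity endomorphisms of $\Pe$ and $\Pl$ (whose images lie in the socles generated by $\xtop\delta_e$ and $\xsoc\delta_\Ls$ respectively) supply the targets of $w_1u_1$ and $u_1w_1$. The main obstacle is performing all length-two compositions inside $\cA$ using the defining relations and the distinguished elements \eqref{eq:xtop}--\eqref{eq:xsoc}, and then rescaling the chosen bases so that they fulfil precisely $u_1w_1=u_2w_2$, $w_1u_1=w_2u_2$ and $u_iw_j=0=w_iu_j$ for $i\neq j$. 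Each individual calculation is routine, but the combined bookkeeping---and matching the scalars that appear to the relations of $B$---is the delicate point of the argument.
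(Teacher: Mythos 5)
Your strategy coincides with the paper's: one passes to the basic algebra $\End_\cA(\Pe\oplus\Pl)$, uses Frobenius reciprocity to identify the four Hom-spaces with graded pieces of the projectives (each two-dimensional), exhibits a basis of eight morphisms, checks the compositions against the relations of $B$, and then transports the classification of indecomposables via Morita equivalence. The only real difference is that the paper delegates the ``delicate point'' you flag---verifying the length-two compositions and matching scalars so that the relations $u_1w_1=u_2w_2$, $w_1u_1=w_2u_2$, etc.\ hold on the nose---to a GAP computation, while you propose to do it by hand; either way the argument is the same.
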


\begin{proof}
Let $P=\Pe\oplus\Pl$ and $E:=\End_{\cA}(P)$ be the basic algebra of $\cA$. We denote 
\begin{align*}
\left(
\begin{matrix}
e_1,\,p_1
&
g_1,\,g_2
\\
\noalign{\smallskip}
f_1,\,f_2
&
e_2,\,p_2
\end{matrix}
\right)\in
\left(
\begin{matrix}
\Hom_{\cA}(M_e,M_e)
&
\Hom_{\cA}(M_e,M_{(132)}) 
\\
\noalign{\smallskip}
\Hom_{\cA}(M_{(132)},M_e)
&
\Hom_{\cA}(M_{(132)},M_{(132)})
\end{matrix}
\right)
\end{align*}
the morphisms determined by
\begin{itemize}
\item $e_1=\id_{M_e}$ and $e_{2}=\id_{M_{(132)}}$.
 \smallskip
 \item $f_1(\delta_{\Ls})=\xij{12}\xij{13}\delta_e$ and $f_2(\delta_{\Ls})=\xij{23}\xij{12}\delta_e$.
 \smallskip
 \item $g_1(\delta_e)=\xij{13}\xij{12}\delta_{\Ls}$ and $g_2(\delta_e)=\xij{12}\xij{23}\delta_{\Ls}$.
 \smallskip
 \item $p_1(\delta_e)=\delta_e$ and $p_2(\delta_{\Ls})=\xsoc\delta_{\Ls}$.
\end{itemize}
These form a basis of $E$ because
\begin{align*}
E\simeq&
\left(
\begin{matrix}
\Hom_{\ku^{\Sn_3}}(\ku_e,M_e)
&
\Hom_{\ku^{\Sn_3}}(\ku_e,M_{(132)}) 
\\
\noalign{\smallskip}
\Hom_{\ku^{\Sn_3}}(\ku_{(132)},M_e)
&
\Hom_{\ku^{\Sn_3}}(\ku_{(132)},M_{(132)})
\end{matrix}
\right)
\end{align*}
and these $\Hom$ spaces are of dimension two, cf. \S\ref{subsec:projective}. 

By a computation in GAP, we see that
\begin{align*}
&e_1+e_2=\id_P,\quad e_ie_j=\delta_{i,j} e_i,\quad1\leq i,j\leq 2,\\
&f_ie_2=e_1f_i=f_i,\quad g_ie_1=e_2g_i=g_i,\quad g_if_i=0,\quad f_ig_i=0,\quad1\leq i\leq 2,
\\
&g_2f_1=p_2,\quad g_1f_2=-p_2,\quad
f_2g_1=p_1,\quad f_1g_2=-p_1;
\end{align*}
it is enough to verify these equalities after evaluating in $\delta_e$ and $\delta_L$.

These are the same relations defining the basic algebra of $\mathcal{D}(T_2(-1))$ and hence $E\simeq B$, see \cite[page 2818]{Ch3}; notice that our convention for the composition of morphisms is opposite to that in {\it loc.~cit.} This proves the first part of the proposition. Therefore the indecomposable modules over $\cA$ are given by translating \cite[Theorem 3.12]{Ch3}. This is Theorem \ref{teo:indecomposable}.
\end{proof}

We will describe in detail the non-simple non-projective indecomposable modules listed on Theorem \ref{teo:indecomposable} in the successive subsections. We notice that every indecomposable module $M$ satisfy $\head(M)\simeq M/\soc(M)$.
According to Chen, we say that $M$ is of $(m,n)$-type if $\soc(M)$ is the direct sum of $n$ simple modules and $M/\soc(M)$ is the direct sum of $m$ simple modules. Given a module $M$ and $k\in\N$, we will denote $kM$ the direct sum of $k$ copies of $M$.
 We recall that a Morita equivalence also preserves simple, indecomposable, injective and projective modules, exact sequences, injective and projective resolutions.

\subsection{The syzygy modules}

We recall that the syzygy $\Omega$ is an endofunctor of the stable category of modules.  It is computed by taking the kernels of projective covers. Here, $\Omega^k(\Le)$ and $\Omega^k(\Ls)$ shall denote certain representatives of the syzygy applied iteratively to the simple modules $\Le$ and $\Ls$. They are defined inductively using the minimal projective resolutions in \cite[page 772]{Ch1} as follows.

We set $\Omega^0(\Le):=\Le$. For $k\geq1$, $\Omega^{k+1}(\Le)$ is determined by the exact sequence
\begin{align}\label{eq:Omega Le s+1 s even}
0\longrightarrow&\Omega^{k+1}(\Le)\longrightarrow(k+1)\Pe\longrightarrow\Omega^{k}(\Le) \longrightarrow0\quad\mbox{for $k$ even,}\\
\noalign{\smallskip}
\label{eq:Omega Le s+1 s odd}
0\longrightarrow&\Omega^{k+1}(\Le)\longrightarrow(k+1)\Pl\longrightarrow\Omega^{k}(\Le) \longrightarrow0\quad\mbox{for $k$ odd.}
\end{align}
Equally, $\Omega^0(\Ls):=\Ls$ and, for $k\geq1$, $\Omega^{k+1}(\Ls)$ is determined by the exact sequence
\begin{align}
\label{eq:Omega Ls s+1 s even}
0\longrightarrow&\Omega^{k+1}(\Ls)\longrightarrow(k+1)\Pl\longrightarrow\Omega^{k}(\Ls) \longrightarrow0\quad\mbox{for $k$ even,}\\
\noalign{\smallskip}
\label{eq:Omega Ls s+1 s odd}
0\longrightarrow&\Omega^{k+1}(\Ls)\longrightarrow(k+1)\Pe\longrightarrow\Omega^{k}(\Ls) \longrightarrow0\quad\mbox{for $k$ odd.}
\end{align}
Notice that $\Omega(\Omega^k(\Le))\simeq\Omega^{k+1}(\Le)$ and $\Omega(\Omega^k(\Ls))\simeq\Omega^{k+1}(\Ls)$. These are the unique $(k+1,k)$-type modules. Moreover, the modules $\Omega^k(\Le)$ and $\Omega^k(\Ls)$ are characterized as the unique (up to isomorphism) indecomposable modules fitting in the exact sequences \cite[Theorem 3.14]{Ch1}:
\begin{align}
\label{eq:s odd Omega Le s+1}
0\longrightarrow k\,\Le\longrightarrow&\Omega^k(\Le)\longrightarrow(k+1)\Ls\longrightarrow0\quad\mbox{for $k$ odd,}\\
\noalign{\smallskip}
\label{eq:s even Omega Le s+1}
0\longrightarrow k\,\Ls\longrightarrow&\Omega^k(\Le)\longrightarrow(k+1)\Le\longrightarrow0\quad\mbox{for $k$ even,}\\
\noalign{\smallskip}
\label{eq:s odd Omega Ls s+1}
0\longrightarrow k\,\Ls\longrightarrow&\Omega^k(\Ls)\longrightarrow(k+1)\Le\longrightarrow0\quad\mbox{for $k$ odd,}\\
\noalign{\smallskip}
\label{eq:s even Omega Ls s+1}
0\longrightarrow k\,\Le\longrightarrow&\Omega^k(\Ls)\longrightarrow(k+1)\Ls\longrightarrow0\quad\mbox{for $k$ even.}
\end{align}
We remark that the modules on the left and right hand sides of the above sequences coincide with the socle and the top of the middle terms by \cite[Corollary 3.16]{Ch1}. 

\subsection{The cosyzygy modules}

The cosyzygy $\Omega^{-1}$ is also an endofunctor of the stable category but it is calculated by taking the cokernels of injective hulls. By \cite[Theorem 3.14]{Ch1}, $\Omega^{-k}(\Le)$ and $\Omega^{-k}(\Ls)$ are the unique $(k,k+1)$-type modules. Moreover, by dualizing the exact sequences \eqref{eq:s odd Omega Le s+1}-\eqref{eq:s even Omega Ls s+1}, we have that 
\begin{align*}%\label{eq:cosyzygies}
\Omega^{-k}(\Le)\simeq\left(\Omega^{k}(\Le)\right)^* \quad\mbox{and}\quad\Omega^{-k}(\Ls)\simeq\left(\Omega^{k}(\Ls)\right)^*
\end{align*}
for all $k\in\N$. We point out that $\Omega^{-k}(\Le)$ and $\Omega^{-k}(\Ls)$ can be constructed using the dual exact sequences of \eqref{eq:Omega Le s+1 s even}-\eqref{eq:Omega Ls s+1 s odd}. 

\

\subsection{The \texorpdfstring{$(k,k)$}{kk}-type indecomposable modules}

Let $k\in\N$ and $0\neq\bt\in\mathfrak{A}$. We choose $\widehat{\bt}\in\mathfrak{A}$ such that $(\widehat{t}_{(12)}, \widehat{t}_{(13)})$ and $(t_{(12)}, t_{(13)})$ are linearly independent. Inspired in \cite[Definition 11]{AV2} and \cite[Lemmas 3.29-30]{Ch4}, we define the module $M_k(\Le,\bt)$ as the vector space with basis $\{w_g^\ell\mid g\in\Sn_3,\,1\leq \ell\leq k\}$ and  action given by
\begin{align}
\label{eq:M k Le bt}
w_g^\ell\in M_k(\Le,\bt)[g]\quad&\mbox{and}\\
\noalign{\smallskip}
\notag
\xij{ij}\cdot w_g^\ell&=
\begin{cases}
0&\mbox{if $g=e$,}\\
\noalign{\smallskip}
w_{(ij)g}^\ell&\mbox{if $g\neq e$ and $\sgn(g)=1$,}\\
\noalign{\smallskip}
\fij{(ij)}(g)\,w_{(ij)g}^\ell&\mbox{if $g\neq (ij)$ and $\sgn(g)=-1$,}\\
\noalign{\smallskip}
\tij{(ij)}\,w_{e}^\ell+\widehat{t}_{(ij)}\,w_{e}^{\ell-1}&\mbox{if $g= (ij)$,}\\
\end{cases}
\end{align}
for all $g\in\Sn_3$ and $1\leq\ell\leq k$. It is an straightforward computation to verify that this definition respects the defining relations of $\cA$.

\begin{prop}\label{prop:M k Le bt}
\
\begin{enumerate}
\item $M_k(\Le,\bt)$ is a $(k,k)$-type indecomposable module fitting in the exact sequence
\begin{align}\label{eq:kLe M kLs}
0\longrightarrow k\Le\longrightarrow &M_k(\Le,\bt) \longrightarrow k\Ls\longrightarrow0
\end{align}
for all $k\in\N$ and $0\neq\bt\in\mathfrak{A}$.
\item Any $(k,k)$-type indecomposable module with $k\Le$ as socle is isomorphic to $M_k(\Le,\bt)$ for some $0\neq\bt\in\mathfrak{A}$.
\smallskip
\item $M_k(\Le,\bt)\simeq M_k(\Le,\widetilde{\bt}\,)$ if and only if $\bt\sim\widetilde{\bt}$.
\smallskip
\item The definition of $M_k(\Le,\bt)$ does not depend on the election of $\widehat{\bt}$.
\smallskip
\item For all $k\in\N$ and $0\neq\bt\in\mathfrak{A}$, there exists an exact sequence
\begin{align}\label{eq:M1 Mk+1 Mk}
0\longrightarrow M_1(\Le,\bt)\longrightarrow M_{k+1}(\Le,\bt)\longrightarrow M_{k}(\Le,\bt)
\longrightarrow 0.
\end{align}
Moreover, any module fitting in such an exact sequence is isomorphic to either $M_{k+1}(\Le,\bt)$ or $M_{1}(\Le,\bt)\oplus M_{k}(\Le,\bt)$
\end{enumerate}
\end{prop}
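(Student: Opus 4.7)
The plan is to handle the five parts in sequence, leaning on the explicit construction \eqref{eq:M k Le bt} throughout. First I would verify that formula \eqref{eq:M k Le bt} yields a well-defined $\cA$-module by checking the defining relations of $\cA$; the conditions $\bt,\widehat{\bt}\in\mathfrak{A}$ (equivalently $\sum_{(ij)}\tij{(ij)}=0$ and the analogous sum for $\widehat{\bt}$) should be exactly what is needed for the two quadratic relations of $\cA$ to hold on the degree-$(ij)$ vectors. Granted this, the subspace spanned by $\{w_e^\ell\}_{1\leq\ell\leq k}$ is concentrated in degree $e$ and killed by every $\xij{ij}$, hence isomorphic to $k\Le$; the quotient by it has basis $\{\overline{w}_g^\ell : g\neq e\}$ on which the $\cA$-action coincides layer-by-layer with \eqref{eq:action of A}, yielding $k\Ls$. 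This produces the sequence \eqref{eq:kLe M kLs} and the $(k,k)$-type. To see that $\soc(M_k(\Le,\bt))$ is exactly $k\Le$, I would argue that any copy of $\Ls$ embedded in $M_k(\Le,\bt)$ would force the coefficients of a lifted $\Ls$-basis to vanish, via the recursion $\tij{(ij)}c_\ell+\widehat{t}_{(ij)}c_{\ell+1}=0$ coming from $\xij{ij}\cdot w_{(ij)}=0$ and the non-vanishing of some $\tij{(ij)}$. Part (4) then follows immediately: another admissible $\widehat{\bt}\,'$ can be written $\widehat{\bt}\,'=\alpha\widehat{\bt}+\beta\bt$ with $\alpha\neq0$, and the change of basis $w_e^\ell\mapsto\alpha w_e^\ell+\beta w_e^{\ell-1}$ (identity on all other $w_g^\ell$) intertwines the two resulting actions.

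For the indecomposability claim in part (1), I plan to compute $\End_\cA(M_k(\Le,\bt))$ and show it is local. A degree-preserving endomorphism $\phi$ writes as $\phi(w_g^\ell)=\sum_m c^g_{\ell,m}w_g^m$; compatibility with $\xij{ij}$ on degrees $g\neq e,(ij)$ forces $c^g_{\ell,m}$ to be independent of $g\neq e$, say $(c_{\ell,m})$. The nontrivial identity at $g=(ij)$, namely $\tij{(ij)}(c_{\ell,m}-d_{\ell,m})+\widehat{t}_{(ij)}(c_{\ell,m+1}-d_{\ell-1,m})=0$ with $d_{\ell,m}$ the components of $\phi$ on the socle, holds for every transposition. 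Since $\bt,\widehat{\bt}$ are independent in $\mathfrak{A}\cong\ku^2$, the three pairs $(\tij{(ij)},\widehat{t}_{(ij)})$ span $\ku^2$, so the identities pin $(c_{\ell,m})$ down to a lower-triangular Toeplitz matrix. This identifies $\End_\cA(M_k(\Le,\bt))\simeq\ku[x]/x^k$, a local ring, and hence $M_k(\Le,\bt)$ is indecomposable.

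For part (2), I would start from an arbitrary $(k,k)$-type indecomposable $M$ with $\soc(M)\simeq k\Le$, lift a basis of $M/\soc(M)\simeq k\Ls$ to homogeneous elements $w_g^\ell\in M[g]$ for $g\neq e$, and read off the action: outside the transpositions it is forced by the $\Ls$-rule (after absorbing a socle correction into a redefinition of the lift, which is possible since $\xij{ij}$ acts trivially on the socle), while for $g=(ij)$ the element $\xij{ij}\cdot w_{(ij)}^\ell$ lies in $\soc(M)$ and yields, in a chosen basis $\{w_e^\ell\}$ of $\soc(M)$, two families of vectors whose coordinate sums vanish by the quadratic relations of $\cA$, hence live in $\mathfrak{A}$. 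Using the endomorphism analysis above and the indecomposability of $M$, these families can be normalized (by a Toeplitz-type change of basis on the socle and on the lifts) into the shape prescribed by \eqref{eq:M k Le bt}, giving $M\simeq M_k(\Le,\bt)$. Part (3) is then settled by comparing the parameters: an isomorphism $\Phi:M_k(\Le,\bt)\to M_k(\Le,\widetilde{\bt})$ restricts to invertible lower-triangular Toeplitz matrices on socle and top, and equating the identities at $w_{(ij)}^\ell$ forces $\widetilde{\bt}=\lambda\bt$ for a non-zero scalar $\lambda$; the converse is the explicit rescaling $w_g^\ell\mapsto\lambda w_g^\ell$ on the top and $w_e^\ell\mapsto w_e^\ell$ on the socle (up to a compatible adjustment).

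The main obstacle is part (5). The exact sequence \eqref{eq:M1 Mk+1 Mk} itself is obtained at once by embedding $M_1(\Le,\bt)$ as the bottom layer $\{w_g^1\}$ of $M_{k+1}(\Le,\bt)$ and projecting by $w_g^\ell\mapsto w_g^{\ell-1}$ (with $w_g^0:=0$); both maps are easily checked to be $\cA$-linear. For the uniqueness part, my plan is to show that $\dim_\ku\Ext^1_\cA(M_k(\Le,\bt),M_1(\Le,\bt))\leq 1$; combined with the existence of the non-split extension $M_{k+1}(\Le,\bt)$, this forces $\dim\Ext^1=1$, so there is a unique non-split extension class and hence, together with the split extension $M_1\oplus M_k$, a unique pair of possible middle terms up to isomorphism. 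To bound $\Ext^1$, I would write a potential extension as a 1-cocycle $\{\varphi_{\xij{ij}}:M_k(\Le,\bt)\to M_1(\Le,\bt)\}_{(ij)}$ subject to the relations coming from $\xij{ij}^2=\fij{ij}$ and the two quadratic relations, and verify that the solution space modulo coboundaries is one-dimensional. The analysis is similar in spirit to the endomorphism computation of paragraph two and again relies on the spanning property of the three pairs $(\tij{(ij)},\widehat{t}_{(ij)})$, but the bookkeeping is more delicate because it runs over $k+1$ layers and one must rule out, in particular, ``syzygy'' middle terms such as $\Omega^{2j-1}(\Le)\oplus\Omega^{-2j}(\Le)$ which a priori share the same $(k+1,k+1)$-type and socle $(k+1)\Le$; this is where the technical core of the proof lies.
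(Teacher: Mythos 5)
Your plan diverges substantially from the paper's actual strategy. The paper proves item (1) directly only for $k>1$ (outsourcing $k=1$ to \cite[Lemmas 12 and 21]{AV2}), and it does so not via the endomorphism ring but by assuming a decomposition $M_k(\Le,\bt)=N\oplus\overline{N}$, showing $w_{(132)}^1$ must lie entirely in one summand using the linear independence of $(t_{(12)},t_{(13)})$ and $(\widehat t_{(12)},\widehat t_{(13)})$, and then running induction on the quotient. For items (2)--(5) the paper does almost no work at all: it observes that \eqref{eq:M1 Mk+1 Mk} is immediate from the construction, and then invokes the Morita equivalence of \S\ref{subsec:morita} to import Chen's classification theorems \cite[page 783 and Theorem 4.16]{Ch1} wholesale. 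Your proposal, by contrast, is a self-contained computational programme: compute $\End_\cA(M_k(\Le,\bt))\simeq\ku[x]/x^k$ for (1), run a lift-and-normalise argument (a square Kronecker pencil classification) for (2)--(3), an explicit basis change for (4), and bound $\dim\Ext^1_\cA(M_k(\Le,\bt),M_1(\Le,\bt))\le 1$ for (5). This route buys independence from Chen's results and does not require establishing the Morita equivalence first, but at the cost of carrying the same homological bookkeeping that Chen already did.

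Two concrete points. First, the ``syzygy middle term'' concern in (5) is a red herring: any middle term $E$ of an extension $0\to M_1(\Le,\bt)\to E\to M_k(\Le,\bt)\to 0$ has $\soc E$ a sum of copies of $\Le$ only, because an embedded $\Ls\hookrightarrow E$ would have to land in $M_1(\Le,\bt)$ or map injectively to $M_k(\Le,\bt)$, and both have socle supported on $\Le$. Since $\Omega^{-2j}(\Le)$ has $\Ls$ in its socle (dualise \eqref{eq:s odd Omega Le s+1}), the decomposable modules you worry about cannot even occur, so once you know $\dim\Ext^1\le 1$ you are done. In fact, rather than computing $\Ext^1$ you could more cheaply finish (5) from your own part (2): $E$ is either decomposable, hence split by Krull--Schmidt applied to the composition factors, or indecomposable of $(k+1,k+1)$-type with socle $(k+1)\Le$, hence $\simeq M_{k+1}(\Le,\bt')$ by (2); one then identifies $\bt'\sim\bt$ by looking at the induced quotient. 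Second, your change of basis in (4) as literally written does not intertwine the two actions: applying $\xij{ij}$ to $\phi(w_{(ij)}^\ell)$ and comparing with $\phi(\xij{ij}\cdot w_{(ij)}^\ell)$ produces an uncancelled $w_e^{\ell-2}$ term. The correct intertwiner is a full lower-triangular Toeplitz change of basis on the socle layers (together with a compatible rescaling), not the single-shift $w_e^\ell\mapsto\alpha w_e^\ell+\beta w_e^{\ell-1}$; the idea is right but the formula needs all the higher corrections.
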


\begin{proof}
The modules $M_1(\Le,\bt)$ are exactly the modules introduced in \cite[Definition 11]{AV2} and hence, for $k=1$, this proposition is \cite[Lemmas 12 and 21]{AV2}. 

We now consider the case $k>1$. We begin by proving that $M_k(\Le,\bt)$ is of $(k,k)$-type. We can see from the very definition that $k\Le$ is a submodule of $\soc M_k(\Le,\bt)$ and $[M_k(\Le,\bt):\Le]=[M_k(\Le,\bt):\Ls]=k$. Suppose there is a copy of $\Ls$ in the socle. Thus, there would exist $0\neq w\in M_k(\Le,\bt)[(132)]$ such that $\xij{12}\xij{23}\cdot w=0$ by the definition of $\Ls$. Let us say $w=\sum_{\ell=1}^k a_\ell\, w_{(132)}^\ell$ with $a_\ell\in\ku$. Then
\begin{align*}
\xij{12}\xij{23}\cdot w&=\tij{(12)}\sum_{\ell=1}^ka_\ell\,w_e^\ell+\widehat{t}_{(12)}\sum_{\ell=1}^{k-1}a_{\ell+1}\,w_e^\ell\\
&=\tij{(12)}a_kw_e^k+
\sum_{\ell=1}^{k-1}\left(\tij{(12)}a_\ell+\widehat{t}_{(12)}a_{\ell+1}\right)w_e^\ell.
\end{align*}
If $\tij{(12)}\neq0$, we see that $a_k=0$ and then deduce inductively that $a_\ell=0$ for all $1\leq\ell< k$. Hence $w=0$, a contradiction. If $\tij{(12)}=0$, then $\tij{(13)}$ should be non-zero and we get the same conclusion by considering $\xij{13}\xij{12}\cdot w$. Therefore $M_k(\Le,\bt)$ is of $(k,k)$-type.

We continue by showing that $M_k(\Le,\bt)$ is indecomposable by induction on $k$. Let us assume the contrary, that $M_k(\Le,\bt)=N\oplus\overline{N}$ is a direct sum of submodules. We claim that $w_{(132)}^1$ belongs to either $N$ or $\overline{N}$. In fact, let assume that $w_{(132)}^1=n+\overline{n}$. Thus, there are scalars $a_1, ..., a_k$ such that
\begin{align*}
n=\sum_{\ell=1}^ka_\ell\,w_{(132)}^\ell\in N\quad\mbox{and}\quad
\overline{n}=(1-a_1)\,w_{(132)}^1-\sum_{\ell=2}^ka_\ell\,w_{(132)}^\ell\in\overline{N}.
\end{align*}
Hence
\begin{align*}
\xij{12}\xij{23}\cdot n&=\tij{(12)}\sum_{\ell=1}^ka_\ell\,w_e^\ell+\widehat{t}_{(12)}\sum_{\ell=1}^{k-1}a_{\ell+1}\,w_e^\ell\in N\quad\mbox{and}\\
\xij{13}\xij{12}\cdot n&=\tij{(13)}\sum_{\ell=1}^ka_\ell\,w_e^\ell+\widehat{t}_{(13)}\sum_{\ell=1}^{k-1}a_{\ell+1}\,w_e^\ell\in N.\
\end{align*}
If $\sum_{\ell=1}^{k-1}a_{\ell+1}\,w_e^\ell\neq0$, then $\sum_{\ell=1}^ka_\ell\,w_e^\ell$ and $\sum_{\ell=1}^{k-1}a_{\ell+1}\,w_e^\ell$ are linearly independent.
Since $(\widehat{t}_{(12)}, \widehat{t}_{(13)})$ and $(t_{(12)}, t_{(13)})$ are also linearly independent, we deduce that $\xij{12}\xij{23}\cdot n$ and $\xij{13}\xij{12}\cdot n$ are linearly independent as well. In particular, we can infer that $\sum_{\ell=1}^{k-1}a_{\ell+1}\,w_e^\ell\in N$. However, if we follow the same reasoning with $\overline{n}$ instead of $n$, we would also conclude that $\sum_{\ell=1}^{k-1}a_{\ell+1}\,w_e^\ell\in\overline{N}$. As $N\cap\overline{N}=0$, $\sum_{\ell=1}^{k-1}a_{\ell+1}\,w_e^\ell=0$ and hence either $n=w_{(132)}^1$ or $\overline{n}=w_{(132)}^1$ as we claimed.

our claim is proved. 

Without loss of generality we can assume that $w_{(132)}^1\in N$. Hence the submodule generated by $w_{(132)}^1$ is contained in $N$, {\it i.~e.} $w_{g}^1\in N$ for all $g\in\Sn_3$.  In consequence, we have the following isomorphisms
\begin{align*}
M_k(\Le,\bt)/(\cA\cdot w_{(132)}^1)\simeq M_{k-1}(\Le,\bt)\simeq N/(\cA\cdot w_{(132)}^1)\oplus\overline{N}.
\end{align*}
By induction, it should hold either $\overline{N}=0$ or $N=\cA\cdot w_{(132)}^1$. If $\overline{N}=0$, the proof is complete. If  $N=\cA\cdot w_{(132)}^1$, there is $b\in\ku$ such that $w_{(132)}^2-bw_{(132)}^1\in\overline{N}$. Then
\begin{align*}
\xij{12}\xij{23}\cdot\left(w_{(132)}^2-bw_{(132)}^1\right)&=
\tij{(12)}w_e^2+\left(\widehat{t}_{(12)}-\tij{(12)}b\right)w_e^1\in\overline{N}\quad\mbox{and}\\
\xij{13}\xij{12}\cdot\left(w_{(132)}^2-bw_{(132)}^1\right)&=
\tij{(13)}w_e^2+\left(\widehat{t}_{(13)}-\tij{(13)}b\right)w_e^1\in\overline{N}.
\end{align*}
Thus, the same reasoning of the above paragraph allows us to infer that $w_e^1\in\overline{N}$. Therefore $w_e^1\in N\cap\overline{N}$ which contradicts our assumption $M_k(\Le,\bt)= N\oplus\overline{N}$. In conclusion, $M_k(\Le,\bt)$ is indecomposable and this finishes the proof of item $(1)$.

\

In order to prove the remaining items we first observe that the existence of the exact sequence \eqref{eq:M1 Mk+1 Mk} is immediate from the very definition. 
On the other hand, we know by \cite[page 783]{Ch1} that there is a unique (up to isomorphism)  indecomposable module fitting in such an exact sequence. Moreover, \cite[Theorem 4.16]{Ch1} states that any $(k+1,k+1)$-type indecomposable module with socle $(k+1)\Le$ is constructed in this way. Thus, we see by induction that item $(2)$ holds for all $k>1$. Equally, as item $(3)$ holds for $k=1$, it does for $k>1$ as well as item $(4)$ by {\it loc.~cit.}
\end{proof}

We now study the dual module $M_k(\Ls,\bt):=M_k(\Le,\bt)^*$. Let $\{\ow_{g}^\ell\mid g\in\Sn_3,\,1\leq \ell\leq k\}$ be the basis of $M_k(\Ls,\bt)$ such that
\begin{align*}
\langle\ow_{g}^\ell,w_h^l\rangle =(-1)^{\delta_e(g)}\,\delta_g(h^{-1})\,\delta_{\ell,k-l+1}\quad\forall\,g,h\in\Sn_3\,1\leq\ell,l\leq k.
\end{align*}
The action of $\cA$ on this basis is given by
\begin{align}
\label{eq:M k Ls bt}
\ow_g^\ell\in M_k(\Ls,\bt)[g]\quad&\mbox{and}\\
\noalign{\smallskip}
\notag
\xij{ij}\cdot\ow_g^\ell&=
\begin{cases}
\tij{(ij)}\ow_{(ij)}^\ell+\widehat{t}_{(ij)}\ow_{(ij)}^{\ell-1}&\mbox{if $g=e$,}\\
\noalign{\smallskip}
\fij{(ij)}(g)\,\ow_{(ij)g}^\ell&\mbox{if $g\neq e$ and $\sgn(g)=1$,}\\
\noalign{\smallskip}
\ow_{(ij)g}^\ell&\mbox{if $g\neq (ij)$ and $\sgn(g)=-1$,}\\
\noalign{\smallskip}
0&\mbox{if $g= (ij)$,}\\
\end{cases}
\end{align}
for all $g\in\Sn_3$ and $1\leq\ell\leq k$. In fact, we have that
\begin{align*}
\langle
\xij{ij}\cdot\ow_g^\ell,w_h^l
\rangle= 
\langle
\ow_g^\ell,S(\xij{ij})\cdot w_h^l
\rangle=&
\langle
\ow_g^\ell,-\sum_{\sigma\in\Sn_3}\sgn(\sigma)\,x_{\sigma^{-1}(ij)\sigma}\,\delta_{\sigma^{-1}(ij)}\cdot w_h^l
\rangle\\
=&
\langle
\ow_g^\ell,\sgn(h) p_{h,(ij)} w_{h(ij)}^l+\sgn(h) q_{h,(ij)} w_{h(ij)}^{l-1}
\rangle
\end{align*}
for certain scalars $p_{(ij),h}$ and $q_{(ij),h}$ given by \eqref{eq:M k Le bt}. 
This is not zero only for $(ij)g=h^{-1}$ and either $\ell=k-l+1$ or $\ell=k-(l-1)+1$. Thus, \eqref{eq:M k Ls bt} follows by a case-by-case analysis which we leave to the reader.

We notice that $M_1(\Ls,\bt)$ is the module $W_\bt(\ku_e,L)$ of \cite[Definition 14]{AV2} although there is a typo in {\it loc.~cit.}.

\begin{prop}
\
\begin{enumerate}
\item $M_k(\Ls,\bt)$ is a $(k,k)$-type indecomposable module fitting in the exact sequence
\begin{align}\label{eq:kLs M kLe}
0\longrightarrow k\Ls\longrightarrow &M_k(\Ls,\bt) \longrightarrow k\Le\longrightarrow0
\end{align}
for all $k\in\N$ and $0\neq\bt\in\mathfrak{A}$.
\item Any $(k,k)$-type indecomposable module with $k\Ls$ as socle is isomorphic to  $M_k(\Ls,\bt)$ for some $0\neq\bt\in\mathfrak{A}$.
\smallskip
\item $M_k(\Ls,\bt)\simeq M_k(\Ls,\widetilde{\bt}\,)$ if and only if $\bt\sim\widetilde{\bt}$.
\smallskip
\item The definition of $M_k(\Ls,\bt)$ does not depend on the election of $\widehat{\bt}$.
\smallskip
\item For all $k\in\N$ and $0\neq\bt\in\mathfrak{A}$, there exists an exact sequence
\begin{align}\label{eq:M1 Mk+1 Mk Ls}
0\longrightarrow M_1(\Ls,\bt)\longrightarrow M_{k+1}(\Ls,\bt)\longrightarrow M_{k}(\Ls,\bt)
\longrightarrow 0.
\end{align}
Moreover, any module fitting in such an exact sequence is isomorphic to either $M_{k+1}(\Ls,\bt)$ or $M_1(\Ls,\bt)\oplus M_k(\Ls,\bt)$.
\end{enumerate}
\end{prop}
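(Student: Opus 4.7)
\smallskip

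The plan is to deduce this proposition directly from the previous one by duality, since by definition $M_k(\Ls,\bt)=M_k(\Le,\bt)^*$, and since $(-)^*$ is an exact contravariant additive self-equivalence of the category of $\cA$-modules that sends simples to simples and indecomposables to indecomposables. A preliminary verification is the one sketched immediately before the statement: that formula \eqref{eq:M k Ls bt} is indeed the action of $\cA$ on $M_k(\Le,\bt)^*$ coming from the antipode; this is the case-by-case check using the explicit form of $S(\xij{ij})$ and of the action \eqref{eq:M k Le bt}, and I would leave the remaining cases to the reader as the text does.

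For item (1), duality sends a $(k,k)$-type indecomposable module to a $(k,k)$-type indecomposable module with socle and top interchanged, using $\Le^*\simeq\Le$ and $\Ls^*\simeq\Ls$. Hence $M_k(\Ls,\bt)$ is $(k,k)$-type and indecomposable by item (1) of the previous proposition, and dualizing \eqref{eq:kLe M kLs} yields \eqref{eq:kLs M kLe}. For item (2), given any $(k,k)$-type indecomposable $N$ with socle $k\Ls$, the module $N^*$ is $(k,k)$-type indecomposable with socle $k\Le$; the previous proposition gives $N^*\simeq M_k(\Le,\bt)$ for some $0\neq\bt$, and double-dualizing yields $N\simeq M_k(\Ls,\bt)$. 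Items (3) and (4) follow immediately from the corresponding items of the previous proposition, since $M_k(\Ls,\bt)\simeq M_k(\Ls,\widetilde{\bt})$ if and only if $M_k(\Le,\bt)\simeq M_k(\Le,\widetilde{\bt})$, and the independence of $\widehat{\bt}$ is preserved under $(-)^*$.

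For item (5), rather than dualizing \eqref{eq:M1 Mk+1 Mk} (which would give an exact sequence with the factors in the reversed order), I would exhibit \eqref{eq:M1 Mk+1 Mk Ls} explicitly on the given basis: the subspace of $M_{k+1}(\Ls,\bt)$ spanned by $\{\ow_g^1\mid g\in\Sn_3\}$ is a submodule, because the last branch of \eqref{eq:M k Ls bt} acting on $\ow_e^1$ produces only $\ow_{(ij)}^1$ (the term with $\widehat{t}_{(ij)}$ drops since there is no $\ow^{0}$), so this submodule is isomorphic to $M_1(\Ls,\bt)$, and the quotient, with basis the images of $\ow_g^\ell$ for $\ell\geq 2$, carries precisely the action of $M_k(\Ls,\bt)$ after relabelling. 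The uniqueness statement—that any indecomposable module fitting in such a sequence is $M_{k+1}(\Ls,\bt)$, while any decomposable one must be $M_1(\Ls,\bt)\oplus M_k(\Ls,\bt)$—I would transfer from the corresponding uniqueness result of the previous proposition by applying $(-)^*$ to the fitting sequence and invoking \cite[Theorem 4.16]{Ch1} on the $\Le$-side.

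The main obstacle is purely bookkeeping in item (5): because duality reverses arrows, one has to take care that the direct construction of \eqref{eq:M1 Mk+1 Mk Ls} on the basis $\{\ow_g^\ell\}$ is compatible with the duality argument used for the uniqueness assertion. Everything else reduces formally to the previous proposition via $(-)^*$.
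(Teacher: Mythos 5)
Your proposal is correct and follows essentially the same route as the paper, whose entire proof of this proposition is the one-liner ``It follows from Proposition \ref{prop:M k Le bt} by taking duals.'' You go further and flag a genuine sublety that the paper glosses over: dualizing \eqref{eq:M1 Mk+1 Mk} yields a short exact sequence of the shape $0\to M_k(\Ls,\bt)\to M_{k+1}(\Ls,\bt)\to M_1(\Ls,\bt)\to 0$, not \eqref{eq:M1 Mk+1 Mk Ls}, so item (5) does not drop out of duality alone; your direct verification that the span of $\{\ow_g^1\}$ is a copy of $M_1(\Ls,\bt)$ with quotient $M_k(\Ls,\bt)$ is the right fix and parallels how the paper's own proof of the previous proposition treats \eqref{eq:M1 Mk+1 Mk} as ``immediate from the very definition.'' For the uniqueness clause in (5), your appeal to \cite[Theorem 4.16]{Ch1} via the Morita equivalence is the same mechanism the paper uses on the $\Le$-side and carries over unchanged. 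So: same method, with the bookkeeping in item (5) done properly rather than left implicit.
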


\begin{proof}
It follows from Proposition \ref{prop:M k Le bt} by taking duals. 
\end{proof}

We can infer the projective covers, the injective hulls, the syzygies and the cosyzygies of the $(k,k)$-type indecomposable modules from the next result.

\begin{prop}\label{prop:projective of MLt and Met}
For all $k\in\N$ and $0\ne\bt\in\mathfrak{A}$, we have the exact sequences 
\begin{align}\label{eq:MkL P Mke}
0\longrightarrow M_k(\Ls,\bt)\longrightarrow &k\Pl\longrightarrow M_k(\Le,\bt) \longrightarrow 0\quad\mbox{and}\\
\label{eq:Mke P MkL}
0\longrightarrow M_k(\Le,\bt)\longrightarrow &k\Pe\longrightarrow M_k(\Ls,\bt) \longrightarrow 0.
\end{align}
Moreover, for $k=1$, the image of the inclusions in the above exact sequences  satisfy 
\begin{align*}
M_1(\Ls,\bt)\simeq\cA\cdot(\xLt\delta_{\Ls})\subset\Pl\quad\mbox{and}\quad 
M_1(\Le,\bt)\simeq\cA\cdot(\xet\delta_{e})\subset\Pe
\end{align*}
\end{prop}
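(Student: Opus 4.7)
My plan is to proceed by induction on $k$. The base case $k=1$ is handled by direct verification in $\Pl$ and $\Pe$, while the inductive step follows from the Horseshoe Lemma.

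For $k=1$, I would define $\pi_{\Le}\colon\Pl\twoheadrightarrow M_1(\Le,\bt)$ by $\delta_{\Ls}\mapsto w_{(132)}^1$ using the induced-module description of $\Pl$, and check surjectivity since $w_{(132)}^1$ generates $M_1(\Le,\bt)$. A direct computation from the action \eqref{eq:M k Le bt} yields $\xij{13}\xij{12}\cdot w_{(132)}^1=t_{(13)}\,w_e^1$ and $\xij{12}\xij{23}\cdot w_{(132)}^1=t_{(12)}\,w_e^1$, whence $\pi_{\Le}(\xLt\delta_{\Ls})=0$ by the definition of $\xLt$. To identify $\cA\cdot(\xLt\delta_{\Ls})$ with $M_1(\Ls,\bt)$, I would exhibit an $\cA$-morphism $M_1(\Ls,\bt)\to\Pl$ sending $\ow_e^1\mapsto\xLt\delta_{\Ls}$, verifying compatibility with the action \eqref{eq:M k Ls bt} by reducing to identities in $\cA$: the quadratic relations $\xij{ij}^2=\fij{ij}$, the Fomin--Kirillov relations, and the constraint $t_{(23)}+t_{(12)}+t_{(13)}=0$ should force the required cancellations. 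A dimension count ($\dim\cA\cdot(\xLt\delta_{\Ls})=6=\dim\Pl-\dim M_1(\Le,\bt)$) then yields \eqref{eq:MkL P Mke} for $k=1$. The dual sequence \eqref{eq:Mke P MkL} is established by the parallel argument using $\pi_{\Ls}\colon\Pe\twoheadrightarrow M_1(\Ls,\bt)$, $\delta_e\mapsto\ow_e^1$, and the element $\xet\delta_e$.

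For the inductive step, suppose the statement holds for $k$. Applying the Horseshoe Lemma to \eqref{eq:M1 Mk+1 Mk} with the projective covers $\Pl\twoheadrightarrow M_1(\Le,\bt)$ and $k\Pl\twoheadrightarrow M_k(\Le,\bt)$ yields a projective presentation $(k+1)\Pl\twoheadrightarrow M_{k+1}(\Le,\bt)$, which is in fact the projective cover since the induced map on tops $(k+1)\Ls\to\head(M_{k+1}(\Le,\bt))\simeq(k+1)\Ls$ is an isomorphism. Its kernel $K$ fits into $0\to M_1(\Ls,\bt)\to K\to M_k(\Ls,\bt)\to 0$. Since $M_{k+1}(\Le,\bt)$ is indecomposable and non-projective, $K=\Omega(M_{k+1}(\Le,\bt))$ is also indecomposable, as $\Omega$ is an auto-equivalence of the stable category of the Frobenius algebra $\cA$. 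By item (5) in the analogous proposition for $M_{k+1}(\Ls,\bt)$, the only indecomposable module fitting in such an extension is $M_{k+1}(\Ls,\bt)$ itself, so $K\simeq M_{k+1}(\Ls,\bt)$. The second exact sequence \eqref{eq:Mke P MkL} follows by the symmetric argument using \eqref{eq:M1 Mk+1 Mk Ls}.

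The principal obstacle lies in the base case: one must carefully track the action of $\cA$ on $\xLt\delta_{\Ls}$ and $\xet\delta_e$ through the non-trivial relations of $\cA$ and verify that the resulting submodules realize $M_1(\Ls,\bt)$ and $M_1(\Le,\bt)$ with the \emph{correct} parameter $\bt$, rather than some equivalent but a priori different $\bs\sim\bt$. Should the direct calculation prove unwieldy, it can be delegated to GAP as described in Appendix \ref{sec:GAP}, or alternatively one may invoke the analogous computations of \cite[\S3]{AV2}, where $M_1(\Le,\bt)$ and $M_1(\Ls,\bt)$ were originally introduced.
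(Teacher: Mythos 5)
Your proof is correct, but for the exact sequences \eqref{eq:MkL P Mke} and \eqref{eq:Mke P MkL} you take a genuinely different route from the paper. The paper disposes of the first part with a single citation: the analogous exact sequences are established on page 1462 of \cite{Ch4} for the modules of the non-simple block of $\D(T_2(-1))$, and the Morita equivalence set up in Section \ref{sec:repA} transfers them directly, since exact sequences, projective covers, and the module types are all Morita-invariant. You instead give a self-contained inductive argument: the base case $k=1$ by explicit verification (or GAP), and the inductive step via the Horseshoe Lemma applied to \eqref{eq:M1 Mk+1 Mk}, with the kernel identified as $\Omega(M_{k+1}(\Le,\bt))$ via the projective-cover argument, then pinned down to $M_{k+1}(\Ls,\bt)$ using item (5) of the proposition on $M_k(\Ls,\bt)$ and the indecomposability of $\Omega$-images in the stable category of a Frobenius algebra. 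Your argument is longer but removes the reliance on \cite{Ch4}; the paper's appeal to Morita equivalence buys brevity at the cost of opacity. For the second part (the identification of $\cA\cdot(\xLt\delta_\Ls)$ and $\cA\cdot(\xet\delta_e)$) both you and the paper reduce to GAP plus \cite[Lemmas 12 and 15]{AV2}, so there the approaches coincide. One small imprecision: you write that the action formulas give $\xij{13}\xij{12}\cdot w_{(132)}^1 = t_{(13)}\,w_e^1$ and $\xij{12}\xij{23}\cdot w_{(132)}^1 = t_{(12)}\,w_e^1$; with \eqref{eq:M k Le bt} one in fact gets $\xij{12}\xij{23}\cdot w_{(132)}^1 = t_{(12)}\,w_e^1$ and $\xij{13}\xij{12}\cdot w_{(132)}^1 = t_{(13)}\,w_e^1$ only after passing through the intermediate element $w_{(ij)}^1$ and using the first branch of \eqref{eq:M k Le bt} at $g=e$, which gives $0$, not $t_{(ij)}\,w_e^1$ directly — the nonzero contribution comes from the fourth branch ($g=(ij)$), so you should double-check the order of the generators and which branch fires, though the conclusion $\pi_{\Le}(\xLt\delta_\Ls)=0$ does hold.
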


\begin{proof}
The first part is in \cite[page 1462]{Ch4}. We next prove the claims for $k=1$.

By a computation in GAP, we see that the morphism $\Pl\longrightarrow M_1(\Le,\bt)$ induced by the assignment $\delta_{\Ls}\mapsto w_{(132)}$ is an epimorphism and the element $\xLt\delta_{\Ls}$ belongs to the kernel. By the proof of \cite[Lemma 15]{AV2}, we know that  $\xLt\delta_{\Ls}$ generates a submodule isomorphic to $M_1(\Ls,\bt)$. This proves $M_1(\Ls,\bt)\simeq\cA\cdot(\xLt\delta_{\Ls})\subset\Pl$

Similarly, one can show the other claim. We only note that the kernel of the epimorphism $\Pe\longrightarrow M_1(\Ls,\bt)$ induced by $\delta_e\mapsto\ow_e$ contains $\xet\delta_{e}$ and this element generates a submodule isomorphic to $M_1(\Le,\bt)$ by the proof of \cite[Lemma 12]{AV2}.
\end{proof}

\section{Tensor products between indecomposable modules}\label{sec:greenA}

In the successive subsections, we decompose the tensor product of every pair of indecomposable modules into the direct sum of indocomposable modules. We then prove our main result over the Green ring of $\cA$.

\subsection{Preliminaries}

We recall some well-known facts which are useful in the calculus of tensor products. We will use them without an explicit mention. 

First, tensoring and dualizing are exact functors. The tensor product with a projective module is projective. See for instance \cite{EGNO}

Second, let $M,N,P,Q$ be modules with $P$ and $Q$ projective, and hence injective, forming the exact sequence
\begin{align*}
0\longrightarrow M\oplus P 
\longrightarrow E
\longrightarrow N\oplus Q
\longrightarrow 0
\end{align*}
Then, using the Krull--Schmidt theorem, it follows that $E\simeq\widetilde{E}\oplus P\oplus Q$ for some module $\widetilde{E}$ fitting in the exact sequence
\begin{align*}
0\longrightarrow M
\longrightarrow\widetilde{E}
\longrightarrow N
\longrightarrow 0.
\end{align*}

\begin{lema}[{\cite[Lemma 3.12]{Ch4}}]\label{le:como Chen}
Let $\{S_1,S_2\}=\{\Le,\Ls\}$ and $M$ be an indecomposable module such that $M/\soc M\simeq k S_1$ for some $k\geq1$. If $f:sP_{S_1}\oplus tP_{S_2}\longrightarrow M$ is an epimorphism with $s,t\geq1$, then $s\geq k$ and $\Ker(f)\simeq\Omega(M)\oplus(s-k)P_{S_1}\oplus tP_{S_2}$.
\end{lema}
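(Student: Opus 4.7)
The plan is to reduce the statement to the standard fact that a surjection from a projective module onto $M$ differs from the projective cover of $M$ by a projective direct summand. First, I would invoke the property noted just before the lemma, namely that $\head(M)\simeq M/\soc(M)$ for every indecomposable $\cA$-module $M$; combined with the hypothesis this gives $\head(M)\simeq kS_1$, so the projective cover of $M$ has the form $\pi\colon kP_{S_1}\twoheadrightarrow M$ with $\Ker(\pi)\simeq\Omega(M)$. The bound $s\geq k$ then follows by applying $-/\rad(-)$ to $f$: the induced surjection $sS_1\oplus tS_2\twoheadrightarrow kS_1$ necessarily factors through the $S_1$-isotypic component since $\Hom(S_2,S_1)=0$, hence $s\geq k$.

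For the decomposition of $\Ker(f)$, the strategy is the classical simultaneous-lifting argument. Using projectivity of $kP_{S_1}$ and surjectivity of $f$, I would lift $\pi$ to $g\colon kP_{S_1}\to sP_{S_1}\oplus tP_{S_2}$ with $fg=\pi$. Dually, using projectivity of $sP_{S_1}\oplus tP_{S_2}$ and surjectivity of $\pi$, I would lift $f$ to $h\colon sP_{S_1}\oplus tP_{S_2}\to kP_{S_1}$ with $\pi h=f$. The composition $hg\in\End(kP_{S_1})$ then satisfies $\pi(hg)=\pi$, and because $\pi$ is a projective cover (so $\Ker(\pi)$ is a superfluous submodule), this forces $hg$ to be an automorphism. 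Replacing $g$ by $g\circ(hg)^{-1}$, we may assume $hg=\id_{kP_{S_1}}$, so that $g$ splits $h$ and
\begin{align*}
sP_{S_1}\oplus tP_{S_2}\simeq\im(g)\oplus\Ker(h)\simeq kP_{S_1}\oplus\Ker(h).
\end{align*}
By the Krull--Schmidt theorem, cancellation yields $\Ker(h)\simeq(s-k)P_{S_1}\oplus tP_{S_2}$.

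Finally, under the displayed decomposition $f$ restricts on the $\im(g)$-summand to a map corresponding to $\pi$ via $g$, and vanishes on the $\Ker(h)$-summand because $f=\pi h$, so
\begin{align*}
\Ker(f)\simeq\Ker(\pi)\oplus\Ker(h)\simeq\Omega(M)\oplus(s-k)P_{S_1}\oplus tP_{S_2},
\end{align*}
as required. The main (rather mild) obstacle is justifying that $hg$ is actually an automorphism: from $\pi(hg)=\pi$ one gets $\im(hg)+\Ker(\pi)=kP_{S_1}$, and the superfluity of $\Ker(\pi)$ forces $hg$ to be surjective; a surjective endomorphism of the finite-length module $kP_{S_1}$ is then automatically an isomorphism. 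This essential-epimorphism step is the hinge on which the entire argument turns.
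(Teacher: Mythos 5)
Your proof is correct and follows the same underlying idea as the paper, which simply cites Chen's Lemma 3.12 for the isomorphism $\Ker(f)\simeq\Omega(M)\oplus(s-k)P_{S_1}\oplus tP_{S_2}$ and notes that $s\geq k$ comes from $kP_{S_1}$ being the projective cover of $M$. You have merely spelled out the (standard) projective-cover comparison that Chen's lemma encapsulates, and derived $s\geq k$ from the same projective-cover observation, so this is essentially the same approach with the details unpacked.
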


\begin{proof}
It is exactly \cite[Lemma 3.12]{Ch4} except for the claim that $s\geq k$ which follows using that $kP_{S_1}$ is the projective cover of $M$. 
\end{proof}

In Appendix \ref{sec:GAP} we explain how we implemente our computations in GAP.

\subsection{Simple tensor simple}\label{subsec: L ot L} We have the following isomorphism
\begin{align}\label{iso:L ot L}
\Ls\ot\Ls\simeq\Le\oplus2\Pl.
\end{align}

Indeed, the projective direct summand is generated by  $n_1:=v_{(13)}\ot v_{(23)}$ and $n_2:=v_{(12)}\ot v_{(13)}$. To prove this, according to Lemma \ref{le:proje direct summand}, we must calculate the action of $\xsoc$ on them. By a computation in GAP, we see that:
\begin{align*}
\xsoc\cdot n_1&=(2a^2-a-1)\left(v_{(23)}\ot v_{(12)}-v_{(12)}\ot v_{(13)}-(a+2)v_{(123)}\ot v_{(123)}\right)\quad\mbox{and}\\
\xsoc\cdot n_2&=(2a^2+a-2)\left(v_{(23)}\ot v_{(12)}-v_{(13)}\ot v_{(23)}-(2a+1)v_{(123)}\ot v_{(123)}\right).
\end{align*}
These elements are non-zero as $a\neq1,-\frac{1}{2},-2$. Then   $N_1:=\cA\cdot n_1$ and $N_2:=\cA\cdot n_2$ are direct summands of $\Ls\ot\Ls$  isomorphic to $\Pl$ by Lemma \ref{le:proje direct summand}. Also, $\xsoc\cdot n_1$ and $\xsoc\cdot n_2$ are clearly linearly independent and hence $N_1\cap N_2=0$ because they generate the socle of $N_1$ and $N_2$. Therefore $\Ls\ot\Ls=N_1\oplus N_2\oplus N\simeq 2\Pl\oplus N$ for some submodule $N$. Since $\dim(\Ls\ot\Ls)=\dim(2\Pl)+1$, $N$ must be isomorphic to $\Le$ and \eqref{iso:L ot L} is proved. We point out that $N$ is spanned by
{\small
\begin{align*}
\frac{1}{2a+1}v_{(23)}\ot v_{(23)}-\frac{1}{a+2} v_{(12)}\ot v_{(12)}&-\frac{1}{a-1}v_{(13)}\ot v_{(13)}-v_{(123)}\ot v_{(132)}+v_{(132)}\ot v_{(123)}.
\end{align*}
}

\subsection{Tensoring by projectives}\label{subsec: ot projective} We will prove that
\begin{align}\label{iso:proj ot L}
\Pl\ot\Ls\simeq4\Pl\oplus\Pe\simeq\Ls\ot\Pl\quad\mbox{and}\quad\Pe\ot\Ls\simeq5\Pl\simeq\Ls\ot\Pe.
\end{align}

We begin by proving the first isomorphism. Let us apply $-\ot\Ls$ to the exact sequence \eqref{eq:s odd Omega Ls s+1} for $k=1$:
\begin{align*}
0\longrightarrow\Ls\ot\Ls\overset{\eqref{iso:L ot L}}{\simeq}\Le\oplus2\Pl\longrightarrow\Omega(L)\ot\Ls\longrightarrow2\Le\ot\Ls\simeq2\Ls\longrightarrow0. 
\end{align*}
Then $\Omega(\Ls)\ot\Ls\simeq 2\Pl\oplus N$ for some module $N$ of dimension $11$. On the other hand, we apply $-\ot\Ls$ to the exact sequence \eqref{eq:Omega Ls s+1 s even} for $k=0$:
\begin{align*}
0\longrightarrow\Omega(L)\ot\Ls\simeq2\Pl\oplus N\longrightarrow\Pl\ot\Ls\longrightarrow\Ls\ot\Ls\overset{\eqref{iso:L ot L}}{\simeq}\Le\oplus2\Pl\longrightarrow0.
\end{align*}
Then $\Pl\ot\Ls\simeq4\Pl\oplus Q$ for some projective module $Q$ such that
\begin{align*}
0\longrightarrow N\longrightarrow Q\longrightarrow\Le\longrightarrow0. 
\end{align*}
Therefore $Q\simeq\Pe$ and the first isomorphism of \eqref{iso:proj ot L} follows.  Notice that the same proof runs for $\Ls\ot\Pl$.

\

% Moreover, we have shown that $N\simeq\Omega(\Le)$ and hence we obtain
% \begin{align}\label{iso:Omega Ls ot L}
% \Omega(\Ls)\ot\Ls\simeq\Omega(\Le)\oplus 2\Pl.
% \end{align}
% This will be useful in the upcoming section.

The proof of the third isomorphism of \eqref{iso:proj ot L} is similar. We first apply $-\ot\Ls$ to the exact sequence \eqref{eq:s odd Omega Le s+1} for $k=1$:
\begin{align*}
0\longrightarrow\Le\ot\Ls\simeq\Ls\longrightarrow\Omega(\Le)\ot\Ls\simeq2\Ls\ot\Ls\overset{\eqref{iso:L ot L}}{\simeq}2\Le\oplus 4\Pl\longrightarrow0. 
\end{align*}
Then $\Omega(\Le)\ot\Ls\simeq4\Pl\oplus N$ for some module $N$ of dimension $7$. Second, we apply $-\ot\Ls$ to the exact sequence \eqref{eq:Omega Le s+1 s even} for $k=0$:
\begin{align*}
0\longrightarrow\Omega(\Le)\ot\Ls\simeq4\Pl\oplus N\longrightarrow\Pe\ot\Ls\longrightarrow\longrightarrow\Le\ot\Ls\simeq\Ls\longrightarrow0. 
\end{align*}
Then $\Pe\ot\Ls\simeq4\Pl\oplus Q$ for some projective module $Q$ such that
\begin{align*}
0\longrightarrow N\longrightarrow Q\longrightarrow\Ls\longrightarrow0. 
\end{align*}
Therefore $Q\simeq\Pl$ which proves the third isomorphism of  \eqref{iso:proj ot L}.   Notice that the same proof runs for $\Ls\ot\Pe$.

\begin{obs}\label{obs:tensoring by proj}
Let $[M:\Ls]$ and $[M:\Le]$ denote the number of composition factors isomorphic to  $\Ls$ and $\Le$, respectively,  of a module $M$. By induction on the length of $M$, it is easy  to see that
\begin{align*}
M\ot\Pl\simeq\Pl\ot M&\simeq[M:\Ls]\Pe\oplus(4[M:\Ls]+[M:\Le])\Pl\quad\mbox{and}\\
M\ot\Pe\simeq\Pe\ot M&\simeq[M:\Le]\Pe\oplus5[M:\Ls]\Pl.
\end{align*}
\end{obs}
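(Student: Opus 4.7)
The plan is to prove both isomorphisms simultaneously by induction on the composition length of $M$, exploiting the fact established in the preliminaries that $M\ot\Pl$ and $M\ot\Pe$ are automatically projective.

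First I would record the structural input. Since $\cA$ is a finite-dimensional Hopf algebra it is Frobenius, so its projective and injective modules coincide; in particular every short exact sequence whose three terms are projective splits. Combined with the Krull--Schmidt theorem, this means each of $M\ot\Pe$ and $M\ot\Pl$ is determined, up to isomorphism, by the number of times $\Pe$ and $\Pl$ occur as summands. The base of the induction handles $M$ simple: if $M=\Le$ the tensor functor is the identity and the formulas read $\Le\ot\Pl\simeq\Pl$ and $\Le\ot\Pe\simeq\Pe$, which matches $[\Le:\Ls]=0$, $[\Le:\Le]=1$; if $M=\Ls$ the formulas read $\Ls\ot\Pl\simeq\Pe\oplus 4\Pl$ and $\Ls\ot\Pe\simeq 5\Pl$, which is precisely \eqref{iso:proj ot L}.

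For the inductive step, assume the statement holds for modules of length $<n$ and let $M$ have length $n>1$. Pick a simple submodule $M'\subset M$ with quotient $M''$ (of length $n-1$). Tensoring the short exact sequence $0\to M'\to M\to M''\to 0$ on the right with $\Pl$ (respectively $\Pe$) preserves exactness because tensoring is exact, and the resulting sequence
\begin{equation*}
0\longrightarrow M'\ot\Pl\longrightarrow M\ot\Pl\longrightarrow M''\ot\Pl\longrightarrow 0
\end{equation*}
has projective terms and hence splits. Therefore $M\ot\Pl\simeq (M'\ot\Pl)\oplus(M''\ot\Pl)$, and the inductive hypothesis combined with the additivity of composition multiplicities, $[M:S]=[M':S]+[M'':S]$ for $S\in\{\Le,\Ls\}$, yields the desired decomposition; the same argument tensoring on the left handles $\Pl\ot M$, and an identical argument works for $\Pe$.

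There is no serious obstacle here: the content of the remark is entirely formal once one has the two base cases from Subsections~\ref{subsec: L ot L}--\ref{subsec: ot projective} and the splitting principle for exact sequences of projectives over the Frobenius algebra $\cA$. The only point that requires a moment of care is choosing the exact sequence so that both endpoints have strictly smaller length, which is why I extract a \emph{simple} submodule rather than an arbitrary proper one.
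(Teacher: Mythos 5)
Your proof is correct and follows exactly the route the paper indicates (``By induction on the length of $M$''): base case the two simples via \eqref{iso:proj ot L}, inductive step via a short exact sequence with a simple subobject, using that tensoring is exact and that exact sequences of projectives over the Frobenius algebra $\cA$ split. Nothing to add.
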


\subsection{Tensor products between syzygy modules}\label{subsec:tensor syzygy} Let $s,t\in\Z_{\geq0}$. Then
\begin{align}
\label{iso:Omega L ot Omega L}
\Omega^s(\Ls)\ot\Omega^t(\Ls)
\simeq\,&\Omega^t(\Ls)\ot\Omega^s(\Ls)\\
\notag\simeq\,&
\Omega^{s+t}(\Le)
\oplus
\begin{cases}
s(3t+2)\Pl&\mbox{for $s$ odd and $t$ even;}\\
\noalign{\smallskip}
st\Pe\oplus2st\Pl&\mbox{for $s$ odd and $t$ odd;}\\
\noalign{\smallskip}
st\Pe\oplus2(s+1)(t+1)\Pl&\mbox{for $s$ even and $t$ even;}\\
\noalign{\smallskip}
(3s+2)t\Pl&\mbox{for $s$ even and $t$ odd;}
\end{cases}
\end{align}
\begin{align}\label{iso:Omega L ot Omega e}
\Omega^s(\Ls)\ot\Omega^t(\Le)
\simeq\,&\Omega^t(\Ls)\ot\Omega^s(\Le)\\
\notag
\simeq\,&\Omega^{s+t}(\Ls)
\oplus
\begin{cases}
st\Pe\oplus2st\Pl&\mbox{for $s$ odd and $t$ even;}\\
\noalign{\smallskip}
s(3t+2)\Pl&\mbox{for $s$ odd and $t$ odd;}\\
\noalign{\smallskip}
(3s+2)t\Pl&\mbox{for $s$ even and $t$ even;}\\
\noalign{\smallskip}
st\Pe\oplus2(t+1)(s+1)\Pl&\mbox{for $s$ even and $t$ odd;}
\end{cases}
\end{align}
\begin{align}\label{iso:Omega e ot Omega e}
\Omega^s(\Le)\ot\Omega^t(\Le)\simeq\,&
\Omega^t(\Le)\ot\Omega^s(\Le)\\
\notag
\simeq\,&\Omega^{s+t}(\Le)
\oplus
\begin{cases}
(3s+2)t\Pl&\mbox{for $s$ odd and $t$ even;}\\
\noalign{\smallskip}
st\Pe\oplus2(s+1)(t+1)\Pl&\mbox{for $s$ odd and $t$ odd;}\\
\noalign{\smallskip}
st\Pe\oplus2st\Pl&\mbox{for $s$ even and $t$ even;}\\
\noalign{\smallskip}
(3t+2)s\Pl&\mbox{for $s$ even and $t$ odd;}
\end{cases}
\end{align}

The demonstrations of these isomorphisms are by induction on $s$ and $t$. We only prove \eqref{iso:Omega L ot Omega L} since the remaining proofs are similar. We will compute $\Omega^s(\Ls)\ot\Omega^t(\Ls)$. We observe that our reasoning will rely on the isomorphisms in \eqref{iso:proj ot L} and therefore $\Omega^s(\Ls)\ot\Omega^t(\Ls)
\simeq\Omega^t(\Ls)\ot\Omega^s(\Ls)$.

For $s=t=0$, this is \eqref{iso:L ot L}. Next, we show the isomorphism for $s+1$ and $t=0$. If $s$ is even, we apply $-\ot\Ls$ to \eqref{eq:Omega Ls s+1 s even} with $k=s$. We obtain the exact sequence:
\begin{align*}
0\longrightarrow\Omega^{s+1}(\Ls)\ot\Ls\longrightarrow&(s+1)\Pl\ot\Ls\overset{\eqref{iso:proj ot L}}{\simeq}4(s+1)\Pl\oplus(s+1)\Pe\longrightarrow\\&
\longrightarrow\Omega^{s}(\Ls)\ot\Ls\overset{\mbox{\footnotesize{IH}}}{\simeq}\Omega^{s}(\Le)\oplus2(s+1)\Pl\longrightarrow0.
\end{align*}
Then, by Lemma \ref{le:como Chen}, $\Omega^{s+1}(L)\ot\Ls\simeq\Omega^{s+1}(\Le)\oplus2(s+1)\Pl$ as claimed. If $s$ is odd, we apply $-\ot\Ls$ to \eqref{eq:Omega Ls s+1 s odd} with $k=s$ and obtain the exact sequence
\begin{align*}
0\rightarrow\Omega^{s+1}(\Ls)\ot\Ls\rightarrow&(s+1)\Pe\ot\Ls\overset{\eqref{iso:proj ot L}}{\simeq}5(s+1)\Pl\rightarrow\Omega^{s}(\Ls)\ot\Ls\overset{\mbox{\footnotesize{IH}}}{\simeq}\Omega^{s}(\Le)\oplus s2\Pl\rightarrow0.
\end{align*}
Again, by Lemma \ref{le:como Chen}, $\Omega^{s+1}(L)\ot\Ls\simeq\Omega^{s+1}(\Le)\oplus2(s+2)\Pl$ as claimed. 

Now, with a similar strategy, we show the isomorphism for $s\in\Z_{\geq0}$ and $t+1$. We apply $\Omega^s(\Ls)\ot-$ to the exact sequences \eqref{eq:Omega Ls s+1 s even} or \eqref{eq:Omega Ls s+1 s odd}, depending on the parity of $k=t$, and use Lemma \ref{le:como Chen}. We must analyse four cases. First, if $s$ is odd and $t$ is even, we have the exact sequence
\begin{align*}
0&\longrightarrow\Omega^s(\Ls)\ot\Omega^{t+1}(\Ls)\longrightarrow\\
&
\longrightarrow\Omega^s(\Ls)\ot(t+1)\Pl\overset{\mbox{\tiny Remark \ref{obs:tensoring by proj} and \eqref{eq:s odd Omega Ls s+1}}}{\simeq}(5s+1)(t+1)\Pl\oplus s(t+1)\Pe\longrightarrow\\
&
\longrightarrow\Omega^{s}(\Ls)\ot\Omega^{t}(\Ls)\overset{\mbox{\footnotesize{IH}}}{\simeq}\Omega^{s+t}(\Le)\oplus s(3t+2)\Pl\longrightarrow0.
\end{align*}
Then $\Omega^{s}(L)\ot\Omega^{t+1}(\Ls)\simeq\Omega^{s+t+1}(\Le)\oplus s(t+1)\Pe\oplus 2s(t+1)\Pl$ as claimed. Second, if $s$ is even  and $t$ is even, we have:
\begin{align*}
0&\longrightarrow\Omega^s(\Ls)\ot\Omega^{t+1}(\Ls)\longrightarrow\\
&
\longrightarrow\Omega^s(\Ls)\ot(t+1)\Pl\overset{\mbox{\tiny Remark \ref{obs:tensoring by proj} and \eqref{eq:s even Omega Ls s+1} }}{\simeq}(5s+4)(t+1)\Pl\oplus (s+1)(t+1)\Pe\longrightarrow\\
&
\longrightarrow\Omega^{s}(\Ls)\ot\Omega^{t}(\Ls)\overset{\mbox{\footnotesize{IH}}}{\simeq}\Omega^{s+t}(\Le)\oplus st\Pe\oplus 2(s+1)(t+1)\Pl\longrightarrow0.
\end{align*}
Then $\Omega^s(\Ls)\ot\Omega^{t+1}(\Ls)\simeq\Omega^{s+t+1}(\Le)\oplus(3s+2)(t+1)\Pl$ as claimed. Third, if $s$ is odd and $t$ is odd, we have:
\begin{align*}
0&\longrightarrow\Omega^s(\Ls)\ot\Omega^{t+1}(\Ls)\longrightarrow\\
&
\longrightarrow\Omega^s(\Ls)\ot(t+1)\Pe\overset{\mbox{\tiny Remark \ref{obs:tensoring by proj} and \eqref{eq:s odd Omega Ls s+1} }}{\simeq}5s(t+1)\Pl\oplus (s+1)(t+1)\Pe\longrightarrow\\
&
\longrightarrow\Omega^{s}(\Ls)\ot\Omega^{t}(\Ls)\overset{\mbox{\footnotesize{IH}}}{\simeq}\Omega^{s+t}(\Le)\oplus st\Pe\oplus 2st\Pl\longrightarrow0.
\end{align*}
Then $\Omega^s(\Ls)\ot\Omega^{t+1}(\Ls)\simeq\Omega^{s+t+1}(\Le)\oplus s(3(t+1)+2)\Pl$ as claimed. Finally, the fourth case which we must consider is with $s$ even and $t$ odd. We have:
\begin{align*}
0&\longrightarrow\Omega^s(\Ls)\ot\Omega^{t+1}(\Ls)\longrightarrow\\
&
\longrightarrow\Omega^s(\Ls)\ot(t+1)\Pe\overset{\mbox{\tiny Remark \ref{obs:tensoring by proj} and \eqref{eq:s even Omega Ls s+1} }}{\simeq}5(s+1)(t+1)\Pl\oplus s(t+1)\Pe\longrightarrow\\
&
\longrightarrow\Omega^{s}(\Ls)\ot\Omega^{t}(\Ls)\overset{\mbox{\footnotesize{IH}}}{\simeq}\Omega^{s+t}(\Le)\oplus  (3s+2)t\Pl\longrightarrow0.
\end{align*}
Then $\Omega^s(\Ls)\ot\Omega^{t+1}(\Ls)\simeq\Omega^{s+t+1}(\Le)\oplus s(t+1)\Pe\oplus 2(s+1)((t+1)+1)\Pl$ as claimed. This conclude the double induction and hence \eqref{iso:Omega L ot Omega L} holds.

\subsection{Tensor products between syzygies and cosyzygy modules} Let $s,t\in\Z_{\geq0}$. The next isomorphisms can be demonstrated by a double induction procedure and following a strategy analogous to that used for showing \eqref{iso:Omega L ot Omega L}.

\

\begin{align}\label{eq:omega L ot omega -L}
\Omega^s(\Ls)\ot\Omega^{-t}(\Ls)&\simeq\Omega^{-t}(\Ls)\ot\Omega^{s}(\Ls)\\
\notag
&\simeq\Omega^{s-t}(\Le)
\oplus
\begin{cases}
(s+1)t\Pe\oplus 2s(t+1)\Pl&\mbox{for $s$ odd $>t$ even;}\\
\noalign{\smallskip}
s(t+1)\Pe\oplus 2s(t+1)\Pl&\mbox{for $s$ odd $<t$ even;}\\
\noalign{\smallskip}
(3s+1)t\Pl&\mbox{for $s$ odd $\geq t$ odd;}\\
\noalign{\smallskip}
s(3t+1)\Pl&\mbox{for $s$ odd $<t$ odd;}\\
\noalign{\smallskip}
(s+1)(3t+2)\Pl&\mbox{for $s$ even $\geq t$ even;}\\
\noalign{\smallskip}
(3s+2)(t+1)\Pl&\mbox{for $s$ even $< t$ even;}\\
\noalign{\smallskip}
(s+1)t\Pe\oplus2(s+1)t\Pl&\mbox{for $s$ even $> t$ odd;}\\
\noalign{\smallskip}
s(t+1)\Pe\oplus2(s+1)t\Pl&\mbox{for $s$ even $< t$ odd;}\\
\end{cases}
\end{align}
\begin{align}\label{eq:omega e ot omega -L}
\Omega^{s}(\Ls)\ot\Omega^{-t}(\Le)&\simeq\Omega^{-t}(\Ls)\ot\Omega^{s}(\Le)\\
\notag
&\simeq
\Omega^{s-t}(\Ls)
\oplus
\begin{cases}
(t+1)(3s+2)\Pl&\mbox{for $t$ odd $>s$ even;}\\
\noalign{\smallskip}
(3t+2)(s+1)\Pl&\mbox{for $t$ odd $<s$ even;}\\
\noalign{\smallskip}
(t+1)s\Pe\oplus2(t+1)s\Pl&\mbox{for $t$ odd $\geq s$ odd;}\\
\noalign{\smallskip}
t(s+1)\Pe\oplus2(t+1)s\Pl&\mbox{for $t$ odd $<s$ odd;}\\
\noalign{\smallskip}
(t+1)s\Pe\oplus2t(s+1)\Pl&\mbox{for $t$ even $\geq s$ even;}\\
\noalign{\smallskip}
t(s+1)s\Pe\oplus(s+1)\Pl&\mbox{for $t$ even $< s$ even;}\\
\noalign{\smallskip}
(3t+1)s\Pl&\mbox{for $t$ even $> s$ odd;}\\
\noalign{\smallskip}
t(3s+1)\Pl&\mbox{for $t$ even $< s$ odd;}\\
\end{cases}
\end{align}
\begin{align}\label{eq:omega e ot omega -e}
\Omega^s(\Le)\ot\Omega^{-t}(\Le)&\simeq
\Omega^{-t}(\Le)\ot\Omega^{s}(\Le)\\
\notag
&\simeq
\Omega^{s-t}(\Le)
\oplus
\begin{cases}
(s+1)t\Pe\oplus2(s+1)t\Pl&\mbox{for $s$ odd $>t$ even;}\\
\noalign{\smallskip}
s(t+1)\Pe\oplus2(s+1)t\Pl&\mbox{for $s$ odd $<t$ even;}\\
\noalign{\smallskip}
(s+1)(3t+2)\Pl&\mbox{for $s$ odd $\geq t$ odd;}\\
\noalign{\smallskip}
(3s+2)(t+1)\Pl&\mbox{for $s$ odd $<t$ odd;}\\
\noalign{\smallskip}
(s+1)(3t+1)\Pl&\mbox{for $s$ even $\geq t$ even;}\\
\noalign{\smallskip}
(3s+1)(t+1)\Pl&\mbox{for $s$ even $< t$ even;}\\
\noalign{\smallskip}
(s+1)t\Pe\oplus2s(t+1)\Pl&\mbox{for $s$ even $> t$ odd;}\\
\noalign{\smallskip}
s(t+1)\Pe\oplus2s(t+1)\Pl&\mbox{for $s$ even $< t$ odd;}\\
\end{cases}
\end{align}

\

\subsection{Tensor products  between \texorpdfstring{$(k,k)$}{(k,k)}-types modules and syzygies}\label{subsec: kk ot omega}

The following isomorphisms hold for all $s\in\Z_{\geq0}$, $k\in\N$ and $\bt\in\mathfrak{A}$.

\

\begin{align}\label{iso:MLt ot Omega L}
M_k(\Ls,\bt)\ot\Omega^s(\Ls)
\simeq&\,\Omega^s(\Ls)\ot M_k(\Ls,\bt)
\\
\notag\simeq
&
\begin{cases}
M_k(\Le,\bt)\oplus (3s+2)k\Pl&\mbox{for $s$ even;}\\
\noalign{\smallskip}
M_k(\Ls,\bt)\oplus sk\Pe\oplus2sk\Pl&\mbox{for $s$ odd;}
\end{cases}
\\
\noalign{\smallskip}
\label{iso:MLt ot Omega e}
M_k(\Ls,\bt)\ot\Omega^s(\Le)
\simeq&\,\Omega^s(\Le)\ot M_k(\Ls,\bt)\\
\notag\simeq
&
\begin{cases}
M_k(\Ls,\bt)\oplus sk\Pe\oplus2sk\Pl &\mbox{for $s$ even;}\\
\noalign{\smallskip}
M_k(\Le,\bt)\oplus (3s+2)k\Pl&\mbox{for $s$ odd ;}
\end{cases}
\\
\noalign{\smallskip}
\label{iso:Met ot Omega L}
M_k(\Le,\bt)\ot\Omega^s(\Ls)
\simeq&\,\Omega^s(\Ls)\ot M_k(\Le,\bt)\\
\notag\simeq
&
\begin{cases}
M_k(\Ls,\bt)\oplus sk\Pe\oplus2(s+1)k\Pl &\mbox{for $s$ even;}\\
\noalign{\smallskip}
M_k(\Le,\bt)\oplus 3sk\Pl&\mbox{for $s$ odd ;}
\end{cases}
\\
\noalign{\smallskip}
\label{iso:Met ot Omega e}
M_k(\Le,\bt)\ot\Omega^s(\Le)
\simeq&\,\Omega^s(\Le)\ot M_k(\Le,\bt)\\
\notag\simeq
&
\begin{cases}
M_k(\Le,\bt)\oplus3sk\Pl  &\mbox{for $s$ even;}\\
\noalign{\smallskip}
M_k(\Le,\bt)\oplus sk\Pe\oplus2(s+1)k\Pl &\mbox{for $s$ odd ;}
\end{cases}
\end{align}

We will prove \eqref{iso:MLt ot Omega L} and \eqref{iso:Met ot Omega L}. The proof of \eqref{iso:MLt ot Omega e} and \eqref{iso:Met ot Omega e} are analogous. We begin by computing the indecomposable summands of $M_k(\Ls,\bt)\ot\Ls$ and $M_k(\Le,\bt)\ot\Ls$ by induction on $k$; recall that $\Omega^0(\Ls)=\Ls$.  For $k=1$, we claim that the submodule $N=\cA\cdot w\subset M_1(\Ls,\bt)\ot\Ls$, where
\begin{align*}
w=\ow_e\ot v_{(132)}-\frac{t_{13}}{a+2}\ow_{(23)}\ot v_{(12)}+\frac{t_{12}}{a-1}\ow_{(12)}\ot v_{(13)},
\end{align*}
is isomorphic to $M_1(\Le,\bt)$. Indeed, the morphism $f:\Pl\rightarrow M_1(\Ls,\bt)\ot\Ls$, induced by $\delta_\Ls\mapsto w$, satisfies that $\dim\im(f)=6$ and $\xLt\delta_{\Ls}\in\Ker(f)$; we verify this throught a computation in GAP. Hence $N\simeq M_1(\Le,\bt)$ as a consequence of Proposition \ref{prop:projective of MLt and Met}. On the other hand, by applying $-\ot\Ls$ to \eqref{eq:kLs M kLe}, we deduce that $2\Pl$ is a direct summand of $M_1(\Ls,\bt)\ot\Ls$. Since $\soc(2\Pl)=2\Ls$ and $\soc(N)=\Le$, we conclude that 
\begin{align*}
M_1(\Ls,\bt)\ot\Ls\simeq M_1(\Le,\bt)\oplus2\Pl
\end{align*}
by a dimensional argument.
Now, if we apply $-\ot\Ls$ to this isomorphism, we obtain
\begin{align*}
&\left(M_1(\Ls,\bt)\ot\Ls\right)\ot\Ls\simeq  M_1(\Le,\bt)\ot\Ls\oplus2\Pl\ot\Ls\overset{\eqref{iso:proj ot L}}{\simeq}  M_1(\Le,\bt)\ot\Ls\oplus8\Pl\oplus2\Pe\\
&\simeq M_1(\Ls,\bt)\ot\left(\Ls\ot\Ls\right)\overset{\eqref{iso:L ot L}}{\simeq} M_1(\Ls,\bt)\oplus 2M_1(\Ls,\bt)\ot\Pl\overset{\eqref{iso:proj ot L}}{\simeq} M_1(\Ls,\bt)\oplus 10\Pl\oplus2\Pe
\end{align*}
and hence the Krull--Schmidt theorem implies that
\begin{align*}
M_1(\Le,\bt)\ot\Ls\simeq M_1(\Ls,\bt)\oplus2\Pl.
\end{align*}
We have proved \eqref{iso:MLt ot Omega L} and \eqref{iso:Met ot Omega L} for $k=1$ and $s=0$.

Let us continue with the inductive step. We assume that the decomposition of $M_k(\Ls,\bt)\ot\Ls$ and $M_k(\Le,\bt)\ot\Ls$ in \eqref{iso:MLt ot Omega L} and \eqref{iso:Met ot Omega L} hold for $k$. Then, tensoring \eqref{eq:M1 Mk+1 Mk Ls} with $\Ls$, we get the exact sequence
\begin{align*}
0\longrightarrow M_1(\Le,\bt)\oplus2\Pl\longrightarrow M_{k+1}(\Ls,\bt)\ot\Ls\longrightarrow M_k(\Le,\bt)\oplus2k\Pl\longrightarrow0.
\end{align*}
Hence $M_{k+1}(\Ls,\bt)\ot\Ls\simeq N\oplus2(k+1)\Pl$ with $N$ fitting in the exact sequence
\begin{align*}
0\longrightarrow M_1(\Le,\bt)\longrightarrow N\longrightarrow M_k(\Le,\bt)\longrightarrow0.
\end{align*}
By Proposition \ref{prop:M k Le bt}, $N\simeq M_{k+1}(\Le,\bt)$ or $N\simeq M_{1}(\Le,\bt)\oplus M_{k}(\Le,\bt)$. Suppose that the second isomorphism holds. We compute $M_{k+1}(\Ls,\bt)\ot\Ls\ot\Ls$ in two ways:
\begin{align*}
\left(M_{k+1}(\Ls,\bt)\ot\Ls\right)\ot\Ls&\overset{\mbox{\footnotesize{IH}}}{\simeq}M_1(\Ls,\bt)\oplus M_k(\Ls,\bt)\oplus P_1\\
&\simeq M_{k+1}(\Ls,\bt)\ot\left(\Ls \ot\Ls\right)\overset{\eqref{iso:L ot L}}{\simeq} M_{k+1}(\Ls,\bt)\oplus P_2;
\end{align*}
here $P_1$ and $P_2$ denote certain projective modules. By the Krull--Schmidt theorem, the above isomorphisms can not be possible. Therefore $N\simeq M_{k+1}(\Le,\bt)$ and hence $M_{k+1}(\Ls,\bt)\ot\Ls\simeq M_{k+1}(\Le,\bt)\oplus2(k+1)\Pl$. Now, by applying $-\ot\Ls$ to this isomorphism, we can deduce that $M_{k+1}(\Le,\bt)\ot\Ls\simeq M_{k+1}(\Ls,\bt)\oplus2(k+1)\Pl$ by arguing as in the case $k=1$. This complete the proof of the inductive step.

Summarizing, we have computed the indecomposable summands of the tensor products $M_k(\Ls,\bt)\ot\Omega^s(\Ls)$ and $M_k(\Le,\bt)\ot\Omega^s(\Ls)$ for all $k\in\N$ and $s=0$. The indecomposable summands for $s>0$ can be found by induction on $s$, in a similar way to the proof of \eqref{iso:Omega L ot Omega L}. We leave it for the reader.

In order to finish the proof of \eqref{iso:MLt ot Omega L} and \eqref{iso:Met ot Omega L} we must calculate the indecomposable summand of $\Omega^s(\Ls)\ot M_k(\Ls,\bt)$ and $\Omega^s(\Ls)\ot M_k(\Le,\bt)$. This can be made as we have proceed above. We leave it for the reader. We only observe that
\begin{align*}
v_{(132)}\ot\ow_e-\frac{t_{13}}{a+2}v_{(12)}\ot\ow_{(13)}+\frac{t_{12}}{a-1}v_{(13)}\ot\ow_{(23)}
\end{align*}
generates a direct summand of $\Omega^0(\Ls)\ot M_1(\Ls,\bt)$ isomorphic to $M_1(\Le,\bt)$.

\subsection{Tensor products  between \texorpdfstring{$(k,k)$}{(k,k)}-types modules}\label{subsec: kk ot kk}

The following isomorphisms hold for all $k,j\in\N$ and $\bt,\bs\in\mathfrak{A}$. Let us set $i:=\min\{j,k\}$.
\begin{align}\label{iso:MLt ot MLs}
M_k(\Ls,\bt)\ot M_j(\Ls,\bs)
\simeq&\,M_j(\Ls,\bs)\ot M_k(\Ls,\bt)\\
\notag\simeq
&
\begin{cases}
2jk\Pl\oplus jk\Pe&\mbox{if $\bt\not\sim\bs$;}\\
\noalign{\smallskip}
M_{i}(\Le,\bt)\oplus M_{i}(\Ls,\bt)\oplus2jk\Pl\oplus (jk-i)\Pe
&\mbox{if $\bt\sim\bs$;}
\end{cases}
\\
\noalign{\smallskip}
\label{iso:Met ot MLs}
M_k(\Ls,\bs)\ot M_j(\Le,\bt)
\simeq&\,M_j(\Le,\bt)\ot M_k(\Ls,\bs)\\
\notag\simeq
&
\begin{cases}
3jk\Pl&\mbox{if $\bt\not\sim\bs$;}\\
\noalign{\smallskip}
M_{i}(\Le,\bt)\oplus M_{i}(\Ls,\bt)\oplus(3jk-i)\Pl&\mbox{if $\bt\sim\bs$;}
\end{cases}
\\
\noalign{\smallskip}
\label{iso:Met ot Mes}
M_k(\Le,\bt)\ot M_j(\Le,\bs)
\simeq&\,M_j(\Le,\bs)\ot M_k(\Le,\bt)\\
\notag\simeq
&
\begin{cases}
2jk\Pl\oplus jk\Pe&\mbox{if $\bt\not\sim\bs$;}\\
\noalign{\smallskip}
M_i(\Le,\bt)\oplus M_i(\Ls,\bt)\oplus2jk\Pl\oplus (jk-i)\Pe
&\mbox{if $\bt\sim\bs$;}
\end{cases}
\end{align}

\subsubsection{Proof for the case $\bt\not\sim\bs$}\label{subsubsec: MLt ot MLs} We demonstrate first \eqref{iso:MLt ot MLs} by double induction. For $k=1=j$, we see that $M_1(\Ls,\bt)\ot M_1(\Ls,\bs)$ has $2\Pl$ as a direct summand, by tensoring \eqref{eq:kLs M kLe} with $M_1(\Ls,\bs)$ and using \eqref{iso:MLt ot Omega L} for $s=0$. On the other hand, a computation in GAP allows us to verify that 
\begin{align*}
\xtop\cdot(\ow_e\ot\ow_e)&=(\tij{(12)}\sij{(13)}-\tij{(13)}\sij{(12)})\biggl(\ow_{(123)}\ot\ow_{(132)}
-\ow_{(132)}\ot\ow_{(123)}\biggr.\\
&\biggl.
(2a+1)\ow_{(23)}\ot\ow_{(23)}
-(a+2)\ow_{(12)}\ot\ow_{(12)}
-(a-1)\ow_{(13)}\ot\ow_{(13)}
\biggr)
\end{align*}
where $\bt=(\tij{(23)},\tij{(12)},\tij{(13)})$ and $\bs=(\sij{(23)},\sij{(12)},\sij{(13)})$. Thus $\xtop\cdot(\ow_e\ot\ow_e)\neq0$ as $\bt\not\sim\bs$. Then $\Pe$ is a direct summand of $M_1(\Ls,\bt)\ot M_1(\Ls,\bs)$  by Lemma \ref{le:proje direct summand}. Putting all together and by a dimensional argument, we get \eqref{iso:MLt ot MLs} for $k=1=j$. Explicitly,
\begin{align*}
M_1(\Ls,\bt)\ot M_1(\Ls,\bs)\simeq2\Pl\oplus\Pe.
\end{align*}

We continue by proving \eqref{iso:MLt ot MLs} for $k+1$ and $j=1$ assuming that it holds for $k$ and $j=1$. Tensoring the exact sequence \eqref{eq:M1 Mk+1 Mk Ls} by $M_1(\Ls,\bs)$, we get
\begin{align*}
0\longrightarrow2\Pl\oplus\Pe\longrightarrow M_{k+1}(\Ls,\bt)\ot M_1(\Ls,\bs)\longrightarrow 2k\Pl\oplus k\Pe\longrightarrow0
\end{align*}
thanks to the inductive hypothesis. Therefore the middle term must to obey \eqref{iso:MLt ot MLs}.

Finally, we prove \eqref{iso:MLt ot MLs} for $k$ and $j+1$ assuming that it holds for $k$ and $j$. We apply $M_k(\Ls,\bt)\ot-$ to the exact sequence \eqref{eq:M1 Mk+1 Mk Ls}, with $j$ instead of $k$, and obtain
\begin{align*}
0\longrightarrow2k\Pl\oplus k\Pe\longrightarrow M_{k}(\Ls,\bt)\ot M_{j+1}(\Ls,\bs)\longrightarrow 2kj\Pl\oplus kj\Pe\longrightarrow0.
\end{align*}
As above, the middle term must to obey \eqref{iso:MLt ot MLs}. This complete the double induction proof for \eqref{iso:MLt ot MLs}; we notice that $M_k(\Ls,\bt)\ot M_j(\Ls,\bs)\simeq M_j(\Ls,\bs)\ot M_k(\Ls,\bt)$ because our arguments hold for all $k,j\in\N$ and $\bt,\bs\in\mathfrak{A}$.

Now, we demonstrate \eqref{iso:Met ot MLs} for $\bt\not\sim\bs$. We apply $-\ot M_k(\Ls,\bs)$ to the exact sequence \eqref{eq:Mke P MkL} and get
\begin{align*}
0\longrightarrow M_j(\Le,\bt)\ot M_k(\Ls,\bs)\longrightarrow 5jk\Pl\oplus jk\Pe\longrightarrow2jk\Pl\oplus jk\Pe\longrightarrow0;
\end{align*}
the middle term is due to Remark \ref{obs:tensoring by proj} and the term on the right-hand side is due to \eqref{iso:MLt ot MLs}. Therefore $M_j(\Le,\bt)\ot M_k(\Ls,\bs)$ decomposes as in \eqref{iso:Met ot MLs} and the same holds for $M_k(\Ls,\bs)\ot M_j(\Le,\bt)$ because the tensor products in Remark \ref{obs:tensoring by proj} and \eqref{iso:MLt ot MLs} are commutative.

To end, we note that the isomorphism \eqref{iso:Met ot Mes} for $\bt\not\sim\bs$ follows by dualizing \eqref{iso:MLt ot MLs}.

\subsubsection{Proof for the case $\bt\sim\bs$} We can assume $\bt=\bs$. We prove first the decomposition of $M_k(\Ls,\bt)\ot M_j(\Le,\bt)$ for $j\geq k$ given in \eqref{iso:Met ot MLs}. 

By the Morita equivalence and \cite[Lemma 3.28]{Ch4}, there exists an exact sequence
\begin{align*}
0\longrightarrow M_k(\Ls,\bt)\longrightarrow N\longrightarrow \Le\longrightarrow 0 
\end{align*}
with $N\simeq\Omega^k(\Ls)$, for $k$ odd, $N\simeq\Omega^k(\Le)$ otherwise. Tensoring by $M_j(\Le,\bt)$, we get
\begin{align*}
0&\longrightarrow M_k(\Ls,\bt)\ot M_j(\Le,\bt)\overset{\iota}{\longrightarrow}\\
&\overset{\iota}{\longrightarrow} N\ot M_j(\Le,\bt)\overset{\mbox{\tiny\eqref{iso:Met ot Omega L}\eqref{iso:Met ot Omega e}}}{\simeq} M_j(\Le,\bt)\oplus3jk\Pl\overset{\pi}{\longrightarrow} M_j(\Le,\bt)\longrightarrow 0.
\end{align*}
Let $\overline{N}$ be a submodule of $M_k(\Ls,\bt)\ot M_j(\Le,\bt)$ isomorphic to $M_j(\Le,\bt)$ which exists by Lemma \ref{le:Met submod MLt ot Met}, see below.  As $\iota$ is a monomorphism, $\iota(\overline{N})\simeq \overline{N}\simeq M_j(\Le,\bt)$. By observing the socles, we conclude that $\iota(\overline{N})\cap (3jk\Pl)=0$ and hence the above exact sequence looks as follows
\begin{align*}
0&\longrightarrow M_k(\Ls,\bt)\ot M_j(\Le,\bt)\overset{\iota}{\longrightarrow}
\iota(\overline{N})\oplus3jk\Pl\overset{\pi}{\longrightarrow} M_j(\Le,\bt)\longrightarrow 0.
\end{align*}
Since $\pi\circ\iota=0$, the restriction $\pi_{|3jk\Pl}:3jk\Pl\longrightarrow M_j(\Le,\bt)$ is an epimorphism. By Lemma \ref{le:como Chen} and Proposition \ref{prop:projective of MLt and Met}, we have that $\ker(\pi_{|3jk\Pl})\simeq M_k(\Ls,\bt)\oplus(3j-1)k\Pl$. Therefore 
\begin{align*}
M_k(\Ls,\bt)\ot M_j(\Le,\bt)\simeq M_j(\Le,\bt)\oplus M_j(\Le,\bt)\oplus(3j-1)k\Pl
\end{align*}
as we wanted.

The proof for $k\geq j$ is analogous but it starts with an exact sequence involving $M_j(\Le,\bt)$ to which we apply $M_k(\Ls,\bt)\ot-$. 

Now, we can deduce \eqref{iso:Met ot Mes} from \eqref{iso:Met ot MLs} by applying by $\Ls\ot-$. In fact, by the formulas of the above subsections we have that
\begin{align*}
L\ot\biggl(M_k(\Ls,\bt)\ot M_j(\Le,\bt)\biggr)\simeq M_{i}(\Le,\bt)&\oplus M_{i}(\Ls,\bt)
\oplus12jk\Pl\oplus(3jk-i)\Pe
\end{align*}
and, on the other hand, we get
\begin{align*}
\biggl(L\ot M_k(\Ls,\bt)\biggr)\ot M_j(\Le,\bt)\simeq M_k(\Le,\bt)\ot M_j(\Le,\bt)
&\oplus10jk\Pl\oplus2jk\Pe.
\end{align*}
By the associative property, the right hand side of both isomorphisms are isomorphic and hence  \eqref{iso:Met ot Mes} holds due to the Krull--Schmidt theorem; notice that $k$ and $j$ do not play any role in our reasoning and then $M_j(\Le,\bt)\ot M_k(\Le,\bt)\simeq M_k(\Le,\bt)\ot M_j(\Le,\bt)$.

The isomorphism $M_j(\Le,\bt)\ot M_k(\Ls,\bt)$ given in \eqref{iso:Met ot MLs} follows by applying $-\ot\Ls$ to \eqref{iso:Met ot Mes} and arguing as in the above paragraph. 

Finally, the isomorphism  \eqref{iso:MLt ot MLs} for $\bt\sim\bs$ follows by dualizing \eqref{iso:Met ot Mes}.

\begin{lema}\label{le:Met submod MLt ot Met}
If $l\leq j,k$, then $M_k(\Ls,\bt)\ot M_j(\Le,\bt)$ has a submodule isomorphic to $M_l(\Le,\bt)$.
\end{lema}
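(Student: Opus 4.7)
The strategy is to reduce to a diagonal case and then produce explicit generators for the desired submodule.

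First, read off from \eqref{eq:M k Le bt} and \eqref{eq:M k Ls bt} that, for any $l\leq j$, the span of $\{w_g^\ell : g\in\Sn_3,\,1\leq\ell\leq l\}$ is a submodule of $M_j(\Le,\bt)$ isomorphic to $M_l(\Le,\bt)$ via $w_g^\ell\mapsto w_g^\ell$, and similarly for $l\leq k$ the span of $\{\ow_g^\ell : g\in\Sn_3,\,1\leq\ell\leq l\}$ is a submodule of $M_k(\Ls,\bt)$ isomorphic to $M_l(\Ls,\bt)$. Indeed, the action of each $\xij{ij}$ on these vectors only shifts the upper index by $0$ or $-1$, so the subspaces are closed, and the induced formulas match \eqref{eq:M k Le bt}, \eqref{eq:M k Ls bt}. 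Tensoring these inclusions yields $M_l(\Ls,\bt)\ot M_l(\Le,\bt)\hookrightarrow M_k(\Ls,\bt)\ot M_j(\Le,\bt)$, so it suffices to exhibit $M_l(\Le,\bt)\hookrightarrow M_l(\Ls,\bt)\ot M_l(\Le,\bt)$.

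For this diagonal case I would generalize the element $w\in M_1(\Ls,\bt)\ot\Ls$ of subsection \ref{subsec: kk ot omega}, whose cyclic submodule was shown there to be $M_1(\Le,\bt)$. The natural ansatz is to define, for each $1\leq m\leq l$, the degree-$(132)$ element
\begin{align*}
y_m:=\sum_{\substack{1\leq\ell_1,\ell_2\leq l\\ \ell_1+\ell_2=l+m}}\left(\ow_e^{\ell_1}\ot w_{(132)}^{\ell_2}-\frac{\tij{(13)}}{a+2}\ow_{(23)}^{\ell_1}\ot w_{(12)}^{\ell_2}+\frac{\tij{(12)}}{a-1}\ow_{(12)}^{\ell_1}\ot w_{(13)}^{\ell_2}\right)
\end{align*}
of $M_l(\Ls,\bt)\ot M_l(\Le,\bt)$, and to check that the submodule $N:=\cA\cdot\langle y_1,\dots,y_l\rangle$ has dimension $6l$ and satisfies the defining relations \eqref{eq:M k Le bt} of $M_l(\Le,\bt)$ with $y_m$ playing the role of $w_{(132)}^m$. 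The verification is a direct computation of $\xij{ij}\cdot y_m$ through $\Delta(\xij{ij})$, delegated to GAP as in Appendix \ref{sec:GAP}. Once the actions and dimension are checked, Proposition \ref{prop:M k Le bt}(2)--(4) identifies $N$ with $M_l(\Le,\bt)$.

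The main obstacle is pinning down the right ansatz. The $\widehat{t}$-contributions coming from $\xij{ij}\cdot\ow_e^{\ell_1}=\tij{(ij)}\ow_{(ij)}^{\ell_1}+\widehat{t}_{(ij)}\ow_{(ij)}^{\ell_1-1}$ and from $\xij{ij}\cdot w_{(ij)}^{\ell_2}=\tij{(ij)}w_e^{\ell_2}+\widehat{t}_{(ij)}w_e^{\ell_2-1}$ must telescope through $\Delta(\xij{ij})$ into a single $\widehat{t}$-shift matching the one prescribed in \eqref{eq:M k Le bt} for $M_l(\Le,\bt)$. The constraint $\ell_1+\ell_2=l+m$ is precisely what makes both sources of shifts drop this sum by $1$ and thus produce the shifted generator $y_{m-1}$; verifying this combinatorial compatibility is the core of the argument, after which the remaining dimension and linear-independence checks become mechanical.
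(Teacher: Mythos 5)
Your reduction to the diagonal case is valid and a nice simplification over how the paper writes it: the span of $\{w_g^\ell : \ell\leq l\}$ in $M_j(\Le,\bt)$ (and of $\{\ow_g^\ell : \ell\leq l\}$ in $M_k(\Ls,\bt)$) is indeed closed, because \eqref{eq:M k Le bt} and \eqref{eq:M k Ls bt} never increase the upper index, so $M_l(\Ls,\bt)\ot M_l(\Le,\bt)\hookrightarrow M_k(\Ls,\bt)\ot M_j(\Le,\bt)$. The paper's proof implicitly only uses indices $i,\ell\leq l$ anyway, so this step is essentially the same argument made explicit.

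The ansatz for the diagonal case, however, has a genuine gap: the elements $y_m$ carry only $t$-dependent coefficients, and this is not enough. The paper's construction adjoins, to the $\fkb$-type combinations you propose, correction terms $\fkc_{i,\ell}$ whose coefficients involve $\widehat{t}_{(12)},\widehat{t}_{(13)}$ (not $t_{(12)},t_{(13)}$). These are there precisely because $\xij{12}\xij{23}$ applied to a single summand of your $y_m$ does not produce a clean $t\cdot\fka + \widehat{t}\cdot(\text{shift})$, but rather
\begin{align*}
\xij{12}\xij{23}\,\fkb_{i,\ell}=t_{(12)}\fka_{i,\ell}+\widehat{t}_{(12)}\bigl(\ow_e^i\ot w_e^{\ell-1}+\ow_{(12)}^{i-1}\ot w_{(12)}^\ell\bigr)-\widehat{t}_{(23)}\bigl(\ow_{(23)}^{i-1}\ot w_{(23)}^\ell-\ow_{(123)}^{i-1}\ot w_{(132)}^\ell\bigr),
\end{align*}
whose $\widehat{t}$-garbage is not a linear combination of $\fka$'s and is not killed by summing along the antidiagonal $\ell_1+\ell_2=l+m$. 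Without the $\fkc$'s to cancel these terms, the cyclic module generated by your $y_1,\dots,y_l$ has too many degree-$e$ vectors and strictly exceeds dimension $6l$, so Proposition \ref{prop:M k Le bt} cannot be invoked. You would discover this in the GAP check, but as written the ansatz will not pass it. A secondary remark: the sign of the third term in your $y_m$ matches the element $w$ of Subsection \ref{subsec: kk ot omega}, whose second tensor factor is $\Ls$; but $M_1(\Le,\bt)$ is not $\Ls$ ($\xij{ij}\cdot w_{(ij)} = \tij{(ij)} w_e$ whereas $\xij{ij}\cdot v_{(ij)}=0$), and the paper's $\fkb_{i,\ell}$ has the opposite sign there, so transplanting that element unchanged is already suspect.
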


\begin{proof}
We introduce the following elements in $M_k(\Ls,\bt)\ot M_j(\Le,\bt)$:
\begin{align*}
\fka_{i,\ell}&:=\ow_e^i\ot w_e^\ell
+\ow_{(23)}^i\ot w_{(23)}^\ell
+\ow_{(12)}^i\ot w_{(12)}^\ell
+\ow_{(13)}^i\ot w_{(13)}^\ell\\
&\qquad\qquad\qquad\qquad-\ow_{(123)}^i\ot w_{(132)}^\ell
-\ow_{(132)}^i\ot w_{(123)}^\ell,\\
\fkb_{i,\ell}&:=\ow_e^i\ot w_{(132)}^\ell
-\frac{t_{(13)}}{a+2}\ow_{(23)}^i\ot w_{(12)}^\ell-\frac{t_{(12)}}{a-1}\ow_{(12)}^i\ot w_{(13)}^\ell,\\
\fkc_{i,\ell}&:=-\frac{\widehat{t}_{(13)}}{a+2}\ow_{(23)}^i\ot w_{(12)}^\ell-\frac{\widehat{t}_{(12)}}{a-1}\ow_{(12)}^i\ot w_{(13)}^\ell,
\end{align*}
for all $1\leq i,\ell\leq j, k$. By a computation in GAP\footnote{We compute in GAP the action of $x_\sigma$ on $\fka_{1,2}$, $\fka_{2,3}$ and $\fka_{2,1}$ in $M_3(\Ls,\bt)\ot M_3(\Le,\bt)$. This is enough to deduce the action for all $i$ and $\ell$ because the action of $x_\sigma$ on $\ow^\ell_g$ and $w^{\ell}_g$ only depends on the elements $\ow^\ell_g$, $\ow^{\ell-1}_g$, $w^\ell_g$ and $w^{\ell-1}_g$. We make the same for $\fkb_{i,\ell}$ and $\fkc_{i,\ell}$.}, we see that
\begin{align*}
x_\sigma\fka_{i,\ell}=\widehat{t}\left(\ow_\sigma^{i-1}\ot w_e^\ell-\ow_e^{i}\ot w_\sigma^{\ell-1}\right);
\end{align*}
\begin{align*}
\xij{12}\xij{23}\fkb_{i,\ell}=&
t_{(12)}\fka_{i,\ell}
+\widehat{t}_{(12)}\left(\ow_e^i\ot w_e^{\ell-1}+\ow_{(12)}^{i-1}\ot w_{(12)}^\ell\right)\\
&\qquad\qquad\qquad-\widehat{t}_{(23)}\left(\ow_{(23)}^{i-1}\ot w_{(23)}^\ell-\ow_{(123)}^{i-1}\ot w_{(132)}^\ell\right);\\
\xij{13}\xij{12}\fkb_{i,\ell}=&
t_{(13)}\fka_{i,\ell}
+\widehat{t}_{(13)}\left(\ow_e^i\ot w_e^{\ell-1}+\ow_{(13)}^{i-1}\ot w_{(13)}^\ell\right)\\
&\qquad\qquad\qquad-\widehat{t}_{(12)}\left(\ow_{(12)}^{i-1}\ot w_{(12)}^\ell-\ow_{(123)}^{i-1}\ot w_{(132)}^\ell\right);\\
\xij{12}\xij{23}\fkc_{i,\ell}=&\widehat{t}_{(12)}\left(\ow_{(13)}^i\ot w_{(13)}^\ell-\ow_{(132)}^i\ot w_{(123)}^\ell\right)\\
&\qquad\qquad\qquad-\widehat{t}_{(13)}\left(\ow_{(23)}^i\ot w_{(23)}^\ell-\ow_{(123)}^i\ot w_{(132)}^\ell\right);\\
\xij{13}\xij{12}\fkc_{i,\ell}=&\widehat{t}_{(13)}\left(\ow_{(23)}^i\ot w_{(23)}^\ell-\ow_{(132)}^i\ot w_{(123)}^\ell\right)\\
&\qquad\qquad\qquad-\widehat{t}_{(23)}\left(\ow_{(12)}^i\ot w_{(12)}^\ell-\ow_{(123)}^i\ot w_{(132)}^\ell\right);
\end{align*}
for all $1\leq i,\ell\leq j, k$ and transposition $\sigma$; the summands corresponding to $i-1=0=\ell-1$ must be obviated.

Finally, for $\ell\leq l\leq j,k$, we set
\begin{align*}
\fkw^\ell_{e}:=\sum_{i=1}^\ell\fka_{i,\ell+1-i},\quad &\fkw^\ell_{(132)}:=\sum_{i=1}^\ell\fkb_{i,\ell+1-i}+\sum_{i=1}^{\ell-1}\fkc_{i,\ell-i},\\
\noalign{\smallskip}
\fkw^\ell_{(23)}:=\xij{13}\fkw^\ell_{(132)},\quad
\fkw^\ell_{(12)}&:=\xij{23}\fkw^\ell_{(132)},\quad
\fkw^\ell_{(13)}:=\xij{12}\fkw^\ell_{(132)},\\
\noalign{\smallskip}
\fkw^\ell_{(123)}:=\frac{1}{1+2a}&\xij{12}\xij{13}\fkw^\ell_{(132)}.
\end{align*}
Using the above formulas and the defining relations of $\cA$, we can see that the action of $x_\sigma$ on the elements $\fkw_g^\ell$ obeys the recipe \eqref{eq:M k Le bt}. Therefore $\{\fkw_g^\ell\mid g\in\Sn_3,\,1\leq\ell\leq l\}$ spans a submodule isomorphic to $M_l(\Le,\bt)$.
\end{proof}

\subsection{Tensor products between cosyzygy modules} These tensor products can be calculated by dualizing the isomorphism of Subsection \ref{subsec:tensor syzygy}.

\subsection{Tensor products  between \texorpdfstring{$(k,k)$}{(k,k)}-types modules and cosyzygies}\label{subsec: kk ot omega -}

These tensor products can be calculated by dualizing the isomorphisms of Subsection \ref{subsec: kk ot omega}.

\subsection{The Green ring of \texorpdfstring{$\cA$}{A}}\label{subsec:greenA}

By definition, this ring is generated by the isomorphism classes of modules with operations $[M]+[N]=[M\oplus N]$ and $[M]\cdot[N]=[M\ot N]$, where $[M]$ and $[N]$ denote the respective isomorphism classes of the modules $M$ and $N$. For $k\in\N$ and $\bt\in\mathfrak{A}/\sim$, we recall that
\begin{align*}
1:=[\Le],&&\lambda:=[\Ls],&&\rho:=[\Pl],&&\omega:=[\Omega(\Le)],&&\oomega:=[\Omega^{-1}(\Le)],&&
\mu_{k,\bt}:=[M_k(\Le,\bt)].
\end{align*}

\begin{proof}[Proof of Theorem \ref{teo:green ring}]
In the previous subsections we have seen that the tensor products between indecomposable modules satisfy $M\ot N\simeq N\ot M$. Then the Green ring of $\cA$ is commutative. Let $\Z[\lambda,\rho]$ denote the $\Z$-subalgebra generated by $\lambda$ and $\rho$. The following facts are also deduced from those subsections:

\begin{align*}
[\Pe]&=\lambda\rho-4\rho\quad\mbox{by \eqref{iso:proj ot L};}
\\
[\Omega^s(\Le)]&\in\omega^s\oplus_{i=0}^{s-1}\Z[\lambda,\rho]\omega^i\quad\mbox{by \eqref{iso:Omega e ot Omega e} and induction in $s$;}
\\
[\Omega^s(\Ls)]&\in\lambda\omega^s\oplus_{i=0}^{s-1}\Z[\lambda,\rho]\omega^i\quad\mbox{by \eqref{iso:Omega L ot Omega e} and the above one;}
\\
[\Omega^{-s}(\Le)]&\in\oomega^s\oplus_{i=0}^{s-1}\Z[\lambda,\rho]\oomega^i\quad\mbox{by dual arguments to the above ones;}
\\
[\Omega^{-s}(\Ls)]&\in\lambda\oomega^s\oplus_{i=0}^{s-1}\Z[\lambda,\rho]\oomega^i\quad\mbox{by dual arguments to the above ones;}
\\
[M_k(\Ls,\bt)]&=-2k\rho+\lambda\mu_{k,\bt}\quad\mbox{by \eqref{iso:Met ot Omega L};}
\end{align*}
for all $s,k\in\N$ and $\bt\in\mathfrak{A}$. These imply that $\mathcal{B}$ forms a $\Z$-basis of the Green ring of $\cA$ since the classes of the indecomposable modules does so. In particular, the elements $\lambda$, $\rho$, $\omega$, $\oomega$, and $\mu_{k,\bt}$ generate the Green ring of $\cA$ as a $\Z$-algebra.

Let us prove that the relations in Table \ref{relaciones} hold. Relations \eqref{ring:L ot L} and \eqref{ring:omega e ot omega -e} follow from \eqref{iso:L ot L} and \eqref{eq:omega e ot omega -e}. By \eqref{iso:proj ot L} and Remark \ref{obs:tensoring by proj}, we know that $[\Pe]=\lambda\rho-4\rho$ and $\rho^2=2[\Pe]+10\rho$. These imply \eqref{ring:proj ot L}; \eqref{ring: omega e ot Pl} and \eqref{ring: omega -e ot Pl} are deduced similarly. \eqref{ring:mu k t ot Pl}--\eqref{ring:mu k t ot mu j t} follow from the isomorphisms in Subsections \ref{subsec: kk ot omega}--\ref{subsec: kk ot kk}.

Now, we have an epimorphism from the commutative $\Z$-algebra $R$ presented by generators and relations as in the statement over the Green ring of $\cA$. In order to show the isomorphism it is enough to prove that $R$ is the $\Z$-span of the corresponding elements of $\mathcal{B}$; notice this set is linearly independent over $\Z$ because it projects over a $\Z$-basis. We leave it for the reader to verify that every element 
$$
\lambda^{n_1}\rho^{n_2}\omega^{n_3}\oomega^{n_4}
\mu_{k_5,\bt_5}^{n_5}\cdots \mu_{k_\ell,\bt_\ell}^{n_\ell}\in R
$$
can be expressed as a linear combination of elements in $\mathcal{B}$ using the relations. 
\end{proof}

\subsection{Semisimplification of  \texorpdfstring{$\Rep\cA$}{Rep(A)}}\label{subsec:Semisimplification}

The element $\chi=\sum_{g\in\Sn_3}\sgn(g)\delta_g$ is an involutory pivot of $\cA$ \cite[Proposition 30]{AV2}. Therefore $\cA$ is a spherical Hopf algebras and we can form the quotient category $\underline{\Rep}\cA$ which is a semisimple tensor category. We refer to \cite{AAGTV} for details on this matter. 

\begin{cor}\label{cor:spherical}
Let $\Gamma$ denote the group $C_2\times\Z$. Then
$\underline{\Rep}\cA$ is monoidally equivalent to the category of $\Gamma$-graded finite-dimensional vector spaces.
\end{cor}

\begin{proof}
The simple objects in $\underline{\Rep}\cA$ are the classes of indecomposable modules with non-zero quantum dimension. The quantum dimension of a module is the trace of the action of the pivot $\chi$. Therefore only the simple and the (co)syzygies modules survive in $\underline{\Rep}\cA$. Let $c$ and $z$ be generators of $C_2$ and $\Z$, respectively. The functor $F:\underline{\Rep}\cA\rightarrow\operatorname{vect}^\Gamma$ induced by $F(L)=\ku_c$, $F(\Omega(\Le))=\ku_z$, $F(\Omega^{-1}(\Le))=\ku_{z^{-1}}$, $F(\Omega(\Ls))=\ku_{cz}$ and $F(\Omega^{-1}(\Ls))=\ku_{cz^{-1}}$ gives the desired equivalence thanks to Theorem \ref{teo:green ring}.
\end{proof}

\appendix

\section{On the computations in GAP}\label{sec:GAP}
We set as underlying field $\texttt{F}=\mathbb{Q}(\texttt{a},\texttt{t12},\texttt{t13},\texttt{s12},\texttt{s13})$,  the function field over the rationals in $5$ indeterminates. The first indeterminate represents the scalar involved in the definition of $\cA=\cA_{[(1,a,-1,-a)]}$. Thus, the computations hold for any scalar $a$ except for $1,-\frac{1}{2},-2$. The remaining indeterminates stand for the parameters defining the $(k,k)$-types  modules.

Let $M$ be an $\cA$-module of dimension $m$ with matrix representation $\varrho_M:\cA\rightarrow\Mat_{m\times m}(\texttt{F})$. In order to calculate the action of $x\in\cA$ over $m\in M$ in GAP, we construct the matrices $\varrho_M(\xij{ij})$ and $\varrho_M(\delta_g)$ of all the generators of $\cA$ in a suitable basis of $M$. Then the image of $\varrho_M$ is constructed in GAP using the command {\texttt AlgebraWithOne} and the the above matrices. The elements of $M$ are represented by $m$-uples and the command {\texttt LeftAlgebraModule} allows us to calculate $x\cdot m$. We next explain how we have constructed the matrix representation of the generators.

\subsection{Matrix representation of \texorpdfstring{$\Ls$}{L}} We use the basis and formula in \eqref{eq:action of A}.

\subsection{Matrix representation of tensor products} We use the formulas of the comultiplication given in Section \ref{sec:A} since $\varrho_{M\ot N}(x)=\varrho_M(x\_{1})\ot\varrho_N({x_{(2)}})$. We recall that the command {\texttt KroneckerProduct} builds up the tensor product of vector spaces.

\subsection{Matrix representation of \texorpdfstring{$\Pl$}{PL}} By Subsection \ref{subsec: L ot L}, we know that $\Pl$ is isomorphic to the submodule generated by $n_1\in\Ls\ot\Ls$. Thus, the command {\texttt SubAlgebraModule} allows us work with $\Pl$ as a submodule of $\Ls\ot\Ls$. In particular, we can consider the basis $\{x\cdot n_1\mid x\in\B\}$ of $\Pl$. Then, we construct the matrix representation of the generators using the command {\texttt MatrixOfAction}. Of course, one can construct these matrices by hand using the formulas \cite[$(19)-(52)$]{AV2} but it is more tedious.

\subsection{Matrix representation of \texorpdfstring{$\Pe$}{Pe}} 

We proceed as for $\Pl$. We use that $\Pe$ is a submodule of $\Ls\ot\Pl$ by Subsection \ref{subsec: ot projective}. A computation in GAP shows that $v_{(123)}\ot\delta_L\in\Ls\ot\Pl$ generates such a submodule.

\subsection{Matrix representation of the \texorpdfstring{$(k,k)$}{(k,k)}-types  modules} 
We use the bases and formulas in \eqref{eq:M k Le bt} and \eqref{eq:M k Ls bt}. The elements $(t_{(23)},t_{(12)},t_{(13)}),(\widehat{t}_{(23)},\widehat{t}_{(12)},\widehat{t}_{(13)})\in\mathfrak{A}$ are represented by $(-\texttt{t12}-\texttt{t13},\texttt{t12}, \texttt{t13})$ and $(-\texttt{s12}-\texttt{s13},\texttt{s12}, \texttt{s13})$, respectively. For our purposes, it is enough to make calculations for $k\in\{1,2,3\}$ because the action of $\xij{ij}$ on the basis elements $\ow^\ell_g$ and $w^{\ell}_g$ only depends on the elements $\ow^\ell_g$, $\ow^{\ell-1}_g$, $w^\ell_g$ and $w^{\ell-1}_g$.

In Subsection \ref{subsubsec: MLt ot MLs}, we carry out computations in $M_1(\Ls,\bt)\ot M_1(\Ls,\bs)$ with $\bt\not\sim\bs$. Then we use $(-\texttt{t12}-\texttt{t13},\texttt{t12}, \texttt{t13})$ in the representation of $M_1(L,\bt)$ and $(-\texttt{s12}-\texttt{s13},\texttt{s12}, \texttt{s13})$ in the representation of $M_1(L,\bs)$ since, for $k=1$, the element $(\widehat{t}_{(23)},\widehat{t}_{(12)},\widehat{t}_{(13)})$ does not play any role in the definitions of these modules.

\end{document}